\documentclass[11pt]{amsart}
\usepackage[T1]{fontenc}
\usepackage{lmodern}
\usepackage{amsmath,amssymb,amsthm}
\usepackage{needspace}
\usepackage[margin=30mm]{geometry}
\usepackage[hidelinks]{hyperref}
\newtheorem{theorem}{Theorem}[section]
\newtheorem{lemma}[theorem]{Lemma}
\newtheorem{proposition}[theorem]{Proposition}
\newtheorem{corollary}[theorem]{Corollary}
\theoremstyle{definition}
\newtheorem{example}[theorem]{Example}
\theoremstyle{remark}
\newtheorem{remark}[theorem]{Remark}
\newcommand{\id}{\mathrm{id}}
\newcommand{\norm}[1]{\left\|#1\right\|}
\newcommand{\cond}[1]{\textup{(#1)}}
\title[Surjective Fischer--Musz\'ely maps]{Surjective Fischer--Musz\'ely maps on positive cones of JB-algebras}
\author{Osamu Hatori}
\address{Niigata University,
Niigata 950-2181, Japan}
\email{oppekepenguin@gmail.com}

\author{Shiho Oi}
\address{Department of Mathematics, Faculty of Science, Niigata University,
Niigata 950-2181, Japan}
\email{shiho-oi@math.sc.niigata-u.ac.jp}

\hypersetup{
  pdfauthor={Osamu Hatori and Shiho Oi},
  pdftitle={Surjective Fischer--Muszely Maps on Positive Cones of JB-Algebras}
}

\date{Working draft, September 16, 2026}
\subjclass[2020]{Primary 46L70; Secondary 39B52, 47B49, 47B65}
\keywords{JB-algebra, positive cone, geometric rigidity, Fischer--Musz\'ely equation, isometric embedding, Jordan isomorphism, harmonic mean}
\begin{document}
\begin{abstract}
We prove that every surjective Fischer--Musz\'ely map between positive
cones of arbitrary JB-algebras is additive and positively homogeneous,
and extends uniquely to a bounded positive linear surjection.
In particular, this gives a complete affirmative answer to Moln\'ar's
problem for arbitrary $C^*$-algebras. The theorem includes nonunital
and exceptional JB-algebras and assumes neither injectivity nor
continuity. The proof connects the norm-valued functional equation
to positive-cone rigidity through translation invariance of an
induced pseudometric. For a surjective map satisfying the
Fischer--Musz\'ely identity with uniform error $\varepsilon$, we also
construct a unique bounded positive linear map $L$ at uniform distance
at most $3\varepsilon$. The set $L(A_+)$ is dense in $B_+$,
although $L$ need not be surjective, even when the original map is
continuous and bijective. For bijective FM maps, we obtain weighted
Jordan representations on the biduals. We also extend the
characterization of surjective norm-sum preservers to JB-algebras
and characterize norm arithmetic-midpoint and harmonic-mean identities
on positive invertible cones of unital JB-algebras.
\end{abstract}
\maketitle

\section{Introduction and the main theorem}

The Fischer--Musz\'ely equation is a norm-valued counterpart of the
Cauchy functional equation; see \cite{FM}. For the positive cones of
JB-algebras, we call a map $T:A_+\to B_+$ an \emph{FM-map} if
\begin{equation}\label{eq:FM}
 \norm{T(a+b)}=\norm{T(a)+T(b)}\qquad(a,b\in A_+).
\end{equation}
Unless explicitly stated otherwise, JB-algebras in this paper are
not assumed to be unital. Linear maps between JB-algebras are understood
to be real-linear. In particular, a Jordan isomorphism means a real-linear
bijection preserving the Jordan product.

Moln\'ar asked whether every surjective FM-map between the positive
cones of $C^*$-algebras must be additive; the question is recorded in
\cite[Problem~1.1]{HH}.

We prove additivity for every surjective FM-map between positive
cones of arbitrary JB-algebras. This includes nonunital and exceptional
algebras and settles Moln\'ar's problem for arbitrary $C^*$-algebras.
No injectivity or continuity assumption is imposed on the map.
The resulting characterization is as follows.

\begin{theorem}\label{thm:main}
Let $A,B$ be JB-algebras and let $T:A_+\to B_+$ be surjective.
The following conditions are equivalent:
\begin{enumerate}
\item[\cond{i}] $T$ is an FM-map.
\item[\cond{ii}] $T$ is additive.
\item[\cond{iii}] $T$ extends to a bounded positive linear surjection $L:A\to B$.
\end{enumerate}
When these conditions hold, $T$ is positively homogeneous, the
extension $L$ is unique, and $L(A_+)=B_+$.
\end{theorem}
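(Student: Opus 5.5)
The easy implications come first. For \cond{iii}$\Rightarrow$\cond{ii}, a linear extension restricts to an additive map. For \cond{ii}$\Rightarrow$\cond{i}, additivity gives $T(a+b)=T(a)+T(b)$, so the norms agree. For \cond{ii}$\Rightarrow$\cond{iii}, I would argue as follows.
\begin{itemize}
\item[(a)] An additive $T:A_+\to B_+$ is monotone: if $a\le c$, then $T(c)=T(a)+T(c-a)\ge T(a)$.
\item[(b)] Hence $T$ is $\mathbb{Q}_+$-homogeneous by additivity, and then $\mathbb{R}_+$-homogeneous by squeezing $ta$ between $qa$ and $q'a$ for rationals $q\le t\le q'$ and using monotonicity together with closedness of $B_+$.
\item[(c)] Define $L(a-b)=T(a)-T(b)$. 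This is well defined by additivity, and it is linear and positive on $A=A_+-A_+$.
\item[(d)] Boundedness is the standard automatic continuity of positive linear maps between JB-algebras. If not, pick $a_n\in A_+$ with $\norm{a_n}\le 4^{-n}$ and $\norm{L a_n}\ge 2^n$. Then $a=\sum a_n$ dominates each $a_n$, and order-monotonicity of the norm on $B_+$ gives $\norm{T(a)}\ge 2^n$ for all $n$, a contradiction.
\item[(e)] Uniqueness follows because $A_+$ spans $A$, and surjectivity of $L$ together with $L(A_+)=B_+$ follows from surjectivity of $T$.
\end{itemize}

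The real content is \cond{i}$\Rightarrow$\cond{ii}. I would define the induced pseudometric on $A_+$ by $d(a,b)=\norm{T(a)-T(b)}$. The plan is to show that $T$ "sees" the order structure through norms of sums, and then to invoke a positive-cone rigidity result from earlier in the paper. Such a result states that a surjective map between positive cones that is an isometry for the metric $\norm{x-y}$, or that preserves the norms of sums, is the restriction of a positive linear map; this is the norm-sum preserver characterization mentioned in the abstract.

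First, FM with $b=0$ and $a=0$ gives $\norm{T(0)}=\norm{2T(0)}$, so $T(0)=0$. Iterating gives $\norm{T(na)}=\norm{nT(a)}$. More generally, induction on the FM identity gives
\[
\norm{T(a_1+\dots+a_n)}=\norm{T(a_1)+\dots+T(a_n)}.
\]
The key step is to show that $\norm{T(a)+T(c)}$ depends only on $\norm{T(a+c)}$; combining this with surjectivity, $T$ preserves norms of sums of pairs. Two facts are needed to make this work.

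The first is translation invariance: $d(a+c,b+c)=d(a,b)$. I expect this to be the main obstacle. I would try to derive it from the following characterization of the norm distance on positive cones in terms of norms of sums, valid in JB-algebras:
\[
\norm{x-y}=\inf\{\norm{z}: x\le y+z,\ y\le x+z,\ z\ge 0\}.
\]
Equivalently, $x\le y+z$ can be detected by the identity $\norm{x+w}\le\norm{y+z+w}$ holding for all $w\in B_+$. The plan is then to transport the order relation $T(x)\le T(y)+T(z)$ through FM and surjectivity. Surjectivity lets $w$ range over all of $B_+$, since every $w$ equals $T(u)$ for some $u$.

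The second fact is that, once $d$ is translation invariant and additive along the relevant sums, the map $T$ descends to the quotient of $A_+$ by $\{d=0\}$. On that quotient the earlier rigidity theorem for surjective isometries, or norm-sum preservers, between positive cones applies. It yields a positive linear $L$ with $T(a)-T(b)$ controlled by $L$, and hence $T(a+b)-T(a)-T(b)$ has norm $0$, which is additivity.

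Injectivity is not assumed, so the kernel $\{a: T(a)=0\}$ has to be shown to be a hereditary cone, closed under addition. I would get this from FM: if $T(a)=T(b)=0$, then $\norm{T(a+b)}=0$, and hereditarity comes from the order-detection above. Once that is in place, rigidity on the quotient finishes the argument.
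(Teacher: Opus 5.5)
Your treatment of \cond{ii}$\Rightarrow$\cond{iii} and of the trivial implications is correct and matches the paper. The gap is in \cond{i}$\Rightarrow$\cond{ii}, and it affects both steps you rely on.

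\emph{The multi-term identity and translation invariance.} The identity $\norm{T(a_1+\dots+a_n)}=\norm{T(a_1)+\dots+T(a_n)}$ does not follow by induction. For $n=3$, the FM identity only gives $\norm{T(a_1+a_2)+T(a_3)}$. Knowing that this equals $\norm{T(a_1)+T(a_2)+T(a_3)}$ for every $a_3$ is, by surjectivity and Lemma~\ref{lem:order} with $r=0$ applied in both directions, equivalent to $T(a_1+a_2)=T(a_1)+T(a_2)$. That is exactly the additivity you are trying to prove. (The special case $\norm{T(na)}=n\norm{T(a)}$ is true, but it needs subadditivity of $f=\norm{T(\cdot)}$ and a comparison with $2^k\geq n$, not iteration.) Your proposed route to translation invariance fails for the same reason. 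Detecting $T(x)\leq T(y)+T(z)$ through $\norm{T(x)+w}\leq\norm{T(y)+T(z)+w}$ involves a three-term sum, and FM cannot rewrite that in terms of $f$. The paper avoids this with the sup-formula of Corollary~\ref{cor:distance}, which involves only two-term sums. FM and surjectivity then give $d(a,b)=\sup_{x\in A_+}|f(a+x)-f(b+x)|$, hence $d(a+c,b+c)\leq d(a,b)$ at once. The reverse inequality comes from four steps:
\begin{enumerate}
\item[(1)] put $p=a+x$, $q=b+x$, and telescope $f(np+c)-f(nq+c)$ along $kp+(n-k)q+c$;
\item[(2)] bound each of the $n$ steps by $d(a+c,b+c)$;
\item[(3)] divide by $n$ using positive homogeneity of $f$;
\item[(4)] let $c/n\to0$ using monotonicity and subadditivity.
\end{enumerate}

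\emph{The rigidity step.} As you set it up, the rigidity step has nothing to act on. The quotient of $A_+$ by $\{d=0\}$, with the metric $d$, is not the positive cone of a JB-algebra with its norm metric. Moreover, $T$ is tautologically an isometry from it onto $B_+$, so no cone-rigidity theorem applies. The norm-sum characterization is also unavailable. An FM-map need not satisfy $\norm{T(a)+T(b)}=\norm{a+b}$ (take $T(a)=2a$), and in the paper Corollary~\ref{cor:normsum} is derived from the very theorem you are proving. Your concluding claim, that some positive linear $L$ forces $T(a+b)-T(a)-T(b)=0$, is therefore unsupported.

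The missing idea is to work with a self-map of $B_+$. For fixed $c\in A_+$, put $F_c(T(a))=T(a+c)-T(c)$. Then:
\begin{enumerate}
\item[(a)] translation invariance makes $F_c$ well defined without injectivity, so no kernel or quotient analysis is needed;
\item[(b)] translation invariance also makes $F_c$ an isometric embedding of $B_+$ into itself with $F_c(0)=0$;
\item[(c)] order preservation of $T$, obtained from Lemma~\ref{lem:order} and FM as in Lemma~\ref{lem:prelim}, puts its values in $B_+$;
\item[(d)] $\norm{F_c(T(a))-T(a)}\leq d(a+c,a)+\norm{T(c)}=d(c,0)+\norm{T(c)}=2\norm{T(c)}$ bounds its displacement.
\end{enumerate}
Corollary~\ref{thm:rigidity} then gives $F_c=\id_{B_+}$, which is exactly $T(a+c)=T(a)+T(c)$.
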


\begin{corollary}[Surjective FM-maps on $C^*$-algebras]
\label{cor:CstarFM}
Let $\mathcal A,\mathcal B$ be $C^*$-algebras, not necessarily
unital, and let $T:\mathcal A_+\to\mathcal B_+$ be surjective.
If
\[
 \norm{T(a+b)}=\norm{T(a)+T(b)}\qquad(a,b\in\mathcal A_+),
\]
then $T$ is additive and positively homogeneous, and it extends
uniquely to a bounded positive complex-linear surjection
$\Phi:\mathcal A\to\mathcal B$ with $\Phi(\mathcal A_+)=\mathcal B_+$.
Conversely, the restriction of any such linear map satisfies
the displayed identity.
\end{corollary}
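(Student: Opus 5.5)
The corollary will be derived from Theorem~\ref{thm:main} by passing to self-adjoint parts. Every $C^*$-algebra $\mathcal A$ gives a JB-algebra $A=\mathcal A_{sa}$ under the Jordan product $a\circ b=\tfrac12(ab+ba)$ and the $C^*$-norm. Its positive cone $A_+$ equals $\mathcal A_+$ as a set, and the norms agree on $\mathcal A_{sa}$. Unitality is not needed for any of this.

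Hence a surjective $T:\mathcal A_+\to\mathcal B_+$ satisfying the displayed identity is a surjective FM-map $A_+\to B_+$ in the sense of \eqref{eq:FM}, where $B=\mathcal B_{sa}$. Theorem~\ref{thm:main} then gives the following:
\begin{itemize}
\item $T$ is additive and positively homogeneous;
\item $T$ extends uniquely to a bounded positive real-linear surjection $L:\mathcal A_{sa}\to\mathcal B_{sa}$;
\item $L(\mathcal A_{sa,+})=\mathcal B_+$.
\end{itemize}

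Next I complexify. For $x\in\mathcal A$ write uniquely $x=h+ik$ with $h,k\in\mathcal A_{sa}$, and set $\Phi(x)=L(h)+iL(k)$. Complex linearity is a routine check. Real-linearity is inherited from $L$, and for multiplication by $i$ we have $ix=-k+ih$, so $\Phi(ix)=-L(k)+iL(h)=i\Phi(x)$. Positivity holds because $\Phi$ agrees with $T$ on $\mathcal A_+$, and $\Phi(\mathcal A_+)=T(\mathcal A_+)=\mathcal B_+$. Surjectivity holds because $\mathcal B=\mathcal B_{sa}+i\mathcal B_{sa}$ and $L$ is onto $\mathcal B_{sa}$. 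Boundedness follows from $\norm{h},\norm{k}\le\norm{x}$, which gives $\norm{\Phi}\le 2\norm{L}$. Alternatively, any positive linear map between $C^*$-algebras is automatically bounded.

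For uniqueness, let $\Psi$ be any complex-linear extension of $T$. Every self-adjoint element is a difference of two positives, $h=h_+-h_-$, so $\Psi$ is determined on $\mathcal A_{sa}$. Complex linearity then determines $\Psi$ on all of $\mathcal A$, so $\Psi=\Phi$.

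For the converse, let $\Phi$ be linear. Then $\Phi(a+b)=\Phi(a)+\Phi(b)$, so the displayed identity holds trivially for the restriction, and surjectivity onto $\mathcal B_+$ is part of the hypothesis.

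I do not expect any real obstacle: all the depth sits in Theorem~\ref{thm:main}. The one point needing care is that the JB-structure on $\mathcal A_{sa}$ has the same cone and the same norm as $\mathcal A$. This is exactly what makes the FM identity transfer verbatim.
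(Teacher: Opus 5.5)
Your proposal is correct and follows essentially the same route as the paper. In both, you pass to the real JB-algebras $\mathcal A_{\mathrm{sa}},\mathcal B_{\mathrm{sa}}$ (same cone and norm), apply Theorem~\ref{thm:main}, complexify via $\Phi(x+iy)=L(x)+iL(y)$ with $\norm{\Phi}\leq2\norm{L}$, deduce uniqueness from the fact that $\mathcal A_+$ spans $\mathcal A$, and obtain the converse from additivity.
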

\begin{proof}
The self-adjoint part $\mathcal A_{\mathrm{sa}}$, with product
$x\circ y=(xy+yx)/2$ and the inherited norm, is a real JB-algebra
whose positive cone is $\mathcal A_+$; the same holds for
$\mathcal B$. Theorem~\ref{thm:main} gives a bounded positive
real-linear surjection
$L:\mathcal A_{\mathrm{sa}}\to\mathcal B_{\mathrm{sa}}$ extending $T$.
Define
\[
 \Phi(x+iy)=L(x)+iL(y)
 \qquad(x,y\in\mathcal A_{\mathrm{sa}}).
\]
The unique decomposition into real and imaginary self-adjoint
parts shows that $\Phi$ is a well-defined complex-linear map.
It is positive, and $\norm{\Phi(z)}\leq2\norm L\norm z$.
Surjectivity follows by lifting the real and imaginary parts
of an arbitrary element of $\mathcal B$ separately.
The complex linear span of $\mathcal A_+$ is $\mathcal A$, which
proves uniqueness. The converse follows from additivity.
\end{proof}

Earlier results cover complementary restricted settings. In the
commutative case, Hirota \cite[Theorem~1.1]{HirotaComm} proved
additivity and positive homogeneity for surjective FM-maps between
positive cones of arbitrary commutative $C^*$-algebras, including the
nonunital case. For unital, possibly noncommutative $C^*$-algebras,
Hirota and Oppekepenguin \cite[Theorem~2.1]{HH} established the
weighted Jordan representation for bijective FM-maps on closed
positive cones; their Corollary~2.3 gives the analogous representation
on positive invertible cones. They also treated surjective FM-maps
satisfying the additional condition
$T^{-1}(B_{++})\subset A_{++}$ in \cite[Theorem~2.4]{HH}.

The link between the FM identity and linearity is translation
invariance of the induced pseudometric.
Lemma~\ref{lem:translation} shows that
$d(a,b)=\norm{T(a)-T(b)}$ is translation invariant. For each
$c\in A_+$, the map $F_c(T(a))=T(a+c)-T(c)$ is consequently well
defined even when $T$ is not injective. It is an isometric
self-embedding of the target positive cone at bounded distance
from the identity. Positive-cone rigidity then forces $F_c$ to be
the identity, giving $T(a+c)=T(a)+T(c)$.
The proof uses JB-functional calculus and norming states and
requires neither separability nor an associative operator
representation of the whole algebra.

The order and distance formulas used to establish translation
invariance have the following antecedents. Dong, Li, Moln\'ar,
and Wong \cite[Lemma~2.6]{DLMW} proved the $r=0$ order
characterization that is the prototype of Lemma~\ref{lem:order}
and supplies its basic proof idea. Hatori and Oi
\cite[Lemma~3.1]{HO} established the error-term formulation
$r\geq0$ for arbitrary, possibly nonunital $C^*$-algebras;
their Proposition~3.2 gives the associated distance formula.
Lemma~\ref{lem:order} and Corollary~\ref{cor:distance} extend
these formulations to arbitrary JB-algebras.

The rigidity used here is connected to the stability theory of
positive-cone isometries \cite{DLL,Sun,V}.
Dong, Leung, and Li \cite[Theorem~5.2]{DLL} approximate zero-fixing,
$\delta$-surjective $\varepsilon$-isometries between suitable ordered
positive cones by cone-preserving surjective linear isometries,
with error $2C_F\varepsilon$ independent of $\delta$.
For $C^*$-algebras their \cite[Corollary~5.3(3)]{DLL} gives
$2\varepsilon$, and \cite[Example~5.5]{DLL} establishes the
optimality of the constant $2$.
Section~3 verifies that arbitrary JB-algebras satisfy their
abstract hypotheses with $C_F=1$. Corollary~\ref{thm:rigidity}
is therefore also a consequence of their theorem: bounded
displacement gives approximate surjectivity, and scaling at zero
error identifies the resulting linear isometry with the identity.
For the signed difference maps arising in the approximate FM
argument, Theorem~\ref{thm:quantitative} gives a formulation with
values in the whole algebra and a prescribed asymptotic model
along each positive ray. Its proof is related to the boundary
localization in \cite[Proposition~3.1]{DLL}; we give a
self-contained argument using JB-functional calculus and norming
states. Example~\ref{ex:quantsharp} illustrates the bounds in this
formulation.

The FM identity also admits a quantitative conclusion.
Theorem~\ref{thm:FMstability} shows that a surjective map
$T:A_+\to B_+$ satisfying
\[
 \big|\norm{T(a+b)}-\norm{T(a)+T(b)}\big|\leq\varepsilon
 \qquad(a,b\in A_+)
\]
has a unique bounded positive linear approximation $L:A\to B$
at finite uniform distance, with
$\sup_{a\in A_+}\norm{T(a)-L(a)}\leq3\varepsilon$.
The estimate is an upper bound. The positive image $L(A_+)$ is
dense in $B_+$, but need not equal $B_+$; Example~\ref{ex:stablec0}
exhibits this phenomenon for a continuous bijective map.
Here the hypothesis is an error bound in the FM equation itself,
and the conclusion is positive linear approximation rather than
approximation by a cone-preserving surjective isometry.

A related norm-sum stability result is due to Sun, Cai, and Zheng
\cite[Theorems~3.7--3.8]{SCZ}, who obtain sharp
$3\varepsilon/2$ approximation for surjective, zero-fixing
approximate norm-sum preservers between positive cones of real
continuous-function spaces on compact Hausdorff spaces.
Their hypothesis compares $\norm{S(a)+S(b)}$ with $\norm{a+b}$.
Their Example~3.9 gives the sharpness example for the norm-sum
estimate used in our proof. Our proof first constructs a positive
linear map from the approximate FM condition and then refines
the uniform estimate using norm-sum bounds.

We also obtain consequences for exact norm-sum and mean identities.
Dong, Li, Moln\'ar, and Wong \cite[Theorem~2.5]{DLMW} characterized
surjective maps on positive cones satisfying
$\norm{T(a)+T(b)}=\norm{a+b}$ when at least one of the two
$C^*$-algebras is unital. Hatori and Oi \cite[Theorem~3.3]{HO}
removed this assumption. Corollary~\ref{cor:normsum} extends their
characterization to arbitrary JB-algebras, and
Corollary~\ref{cor:Cstarnormsum} records the $C^*$-algebra case.
Setting $a=b$ in the norm-sum identity gives norm preservation,
which is not required in the FM identity. Thus the FM theorem
also applies to maps outside the norm-sum-preserving class.

Sections~3--5 develop the rigidity principle and its exact and
approximate FM consequences. Section~\ref{sec:Jordan} gives Jordan
representations, including the stability case under an additional
lower distance estimate. The remaining sections treat norm-sum and
mean identities. In particular, Corollary~\ref{cor:Cstarmeans} answers
the norm-midpoint question in the final section of \cite{HH}.

\section{JB-algebra preliminaries and recovery of order and distance}

A JB-algebra is a real Jordan Banach algebra $(A,\circ)$ whose norm
satisfies
\[
 \norm{x\circ y}\leq\norm{x}\norm{y},\qquad
 \norm{x^2}=\norm{x}^2,\qquad
 \norm{x^2}\leq\norm{x^2+y^2}.
\]
Its positive cone $A_+=\{x^2:x\in A\}$ is closed, proper, and
generating, and the norm is monotone on this cone. Continuous
functional calculus gives $x=x_+-x_-$ with
$x_\pm\in A_+$ and $\norm{x_\pm}\leq\norm{x}$.
When $A$ is unital, we denote its positive invertible cone by
\[
 A_{++}=\{a\in A_+:a\text{ is invertible}\}.
\]
We use the standard structure theory in \cite{HS,Shultz}.
The bidual $A^{**}$, with the extended Jordan product, is a unital
JBW-algebra, and the canonical embedding preserves the norm,
product, and order. For a nonunital algebra, expressions involving
$1$ are interpreted in its bidual. The cone $A_+$ is weak-star
dense in $(A^{**})_+$, and the positive normal functionals on
$A^{**}$ are the canonical extensions of the elements of $A^*_+$.

The quadratic representation is
\[
 U_x(y)=2x\circ(x\circ y)-x^2\circ y.
\]
It is positive. For an invertible $x$, it is a linear order
isomorphism with inverse $U_{x^{-1}}$. For positive invertible
$x,y$, one has
\begin{equation}\label{eq:quadraticinverse}
 (U_x y)^{-1}=U_{x^{-1}}(y^{-1}).
\end{equation}
These identities, and the normalization of linear order
isomorphisms by quadratic representations, are reviewed in
\cite[Sections 2.2--2.3]{LRW}.

For $0\leq h\leq1$ in a JBW-algebra, the sequence $h^{1/n}$
increases to its support projection $p=s(h)$ and converges to $p$
in the weak-star topology. If $0\leq u\leq t h$ for some $t>0$,
then $0\leq u\leq tp$ and $U_pu=u$. To justify the latter
assertion, the unital JB-subalgebra generated by $p$ and $u$
is a JC-algebra by the Shirshov--Cohn theorem; see
\cite[Theorem 7.2.5]{HS} and \cite[Section 2.2]{LRW}.
In an operator representation of this subalgebra,
$0\leq u\leq tp$ implies
$(1-p)u(1-p)=0$, hence $u^{1/2}(1-p)=0$ and $u=pup$.
Since $U_pu=pup$ in this representation, the assertion follows.
Thus compression by $U_p$ replaces associative compression
without requiring the whole JBW-algebra to be special.

The zero algebra will be omitted when states or norm-one elements
are used; the conclusions involving it are immediate.

The following lemma extends the order characterization originating
in Dong, Li, Moln\'ar, and Wong \cite[Lemma~2.6]{DLMW}.
Their result treats the case $r=0$ for $C^*$-algebras, and the
proof below follows the basic idea of their argument. Hatori and
Oi \cite[Lemma~3.1]{HO} established the version with the error
term $r\geq0$ for arbitrary $C^*$-algebras, including the nonunital
case. We extend that formulation to arbitrary JB-algebras.

\begin{lemma}\label{lem:order}
Let $C$ be a JB-algebra, $u,v\in C_+$, and $r\geq0$. Then
\begin{equation}\label{eq:order}
 u-v\leq r1
 \quad\Longleftrightarrow\quad
 \norm{u+z}\leq\norm{v+z}+r\quad(z\in C_+).
\end{equation}
\end{lemma}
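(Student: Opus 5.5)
The forward implication is the easy direction, and I will do it first. Suppose $u-v\leq r1$ in $C^{**}$ and $z\in C_+$. Then $u+z\leq v+z+r1$. Since $v+z\geq0$ and $v+z\leq\norm{v+z}1$, we get
\[
 0\leq u+z\leq(\norm{v+z}+r)1 .
\]
In a JBW-algebra, $0\leq x\leq s1$ implies $\norm{x}\leq s$. Because the canonical embedding into $C^{**}$ is isometric, this gives $\norm{u+z}\leq\norm{v+z}+r$.

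For the converse I argue by contraposition, following the idea of \cite[Lemma~2.6]{DLMW}. Assume $u-v\not\leq r1$. The first task is to find a positive normal state $\varphi$, coming from an element of $C^*_+$, with $\varphi(u-v)>r$. Suppose no such state exists, so $\varphi(u-v-r1)\leq0$ for every state $\varphi$ of $C$. The positive normal functionals on $C^{**}$ are exactly the extensions of elements of $C^*_+$, and they separate the order in the JBW-algebra. Hence $u-v\leq r1$, a contradiction. Put $\delta=\varphi(u-v)-r>0$.

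Next I localize at $\varphi$ by building a test element $z$ from $v$. Take $\lambda$ large, say $\lambda\geq\norm{v}$, and let $z=f(v)$ for a functional-calculus function such as $f(t)=\lambda-t$ on $[0,\norm v]$. This needs a unit, so in $C$ I will use an approximate version: choose $h\in C_+$, $\norm h\leq1$, with $\varphi(h)$ close to $1$. This is possible since $C_+$ is weak-star dense in $(C^{**})_+$ and, on the ball, approximates $1$ in the $\varphi$-seminorm. Then set $z=\lambda h-U_{h^{1/2}}v$, or a similar expression. The two requirements are that $v+z\approx\lambda h+(v-U_{h^{1/2}}v)$ has norm at most about $\lambda$, and that $\varphi(u+z)\geq\lambda\varphi(h)+\varphi(u)-\varphi(v)-\text{small}$. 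Together these give
\[
 \norm{u+z}\geq\varphi(u+z)>\lambda+r\geq\norm{v+z}+r
\]
once the errors are below $\delta$.

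An alternative I would try first, since it avoids the approximate unit, is to work with functional calculus of the single element $v$. Let $z=g_\lambda(v)$ with $g_\lambda(t)=\max(\lambda-t,0)$ and $\lambda\geq\norm v$. Strictly, $g_\lambda(0)\neq0$, so I will modify it near $0$ so that $z\in C$. Then $v+z\leq\lambda\cdot(\text{approx. }1)$, and $\varphi(z)\approx\lambda-\varphi(v)$ provided $\varphi$ is nearly supported on the part of the spectrum where the cutoff is inactive. The correction is handled by support projections. Using the passage from $0\leq u\leq th$ to $U_pu=u$ recorded above, I compress by $p=s(\cdot)$. The Shirshov--Cohn theorem \cite[Theorem~7.2.5]{HS} lets me make these norm estimates inside a JC-subalgebra generated by two elements, here $v$ and $h$, so operator inequalities can be used there.

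The main obstacle is this nonunital localization. The test element $z$ must lie in $C$ itself rather than in $C^{**}$, and it must satisfy both a $\varphi$-lower bound and a norm upper bound for $v+z$ with errors smaller than $\delta$. I expect to control this with the inequality
\[
 \norm{v+\lambda h-U_{h^{1/2}}v}\leq\lambda+\norm{v-U_{h^{1/2}}v}
\]
together with the fact that $\varphi(v-U_{h^{1/2}}v)\to0$ as $\varphi(h)\to1$. The last step uses a Cauchy--Schwarz estimate for $\varphi$ in the JC-subalgebra generated by $h$ and $v$. Note that $v-U_{h^{1/2}}v$ need not be positive. This is why I need a norm bound on it uniform in $\lambda$, while the gain $\lambda(1-\varphi(h))$ is lost, so $\varphi(h)$ must be chosen after $\lambda$ is fixed.
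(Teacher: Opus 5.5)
\textbf{There is a gap in the converse.} Your forward implication is correct and coincides with the paper's. In the converse, the reduction to a normal state $\varphi$ with $\varphi(u-v)=r+\delta>r$ is also fine. The one substantive step, however, is producing $z\in C_+$ with $\norm{v+z}+r<\varphi(u+z)$, and neither of your constructions achieves it.

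\emph{Functional calculus of $v$ alone.} This cannot work once $g(0)=0$ is imposed to keep $z$ in $C$. In $C=c_0(\mathbb N,\mathbb R)$ take $u=e_1$, $v=e_2$, $r=0$. Then $u-v\not\leq0$, but every such $z=g(v)\in C_+$ equals $g(1)e_2$, and $\norm{u+z}=\max\{1,g(1)\}\leq1+g(1)=\norm{v+z}$. The state detecting $u-v\not\leq0$ lives exactly where $v$ vanishes, so your proviso that $\varphi$ sit where the cutoff is inactive fails. The test elements must see the support of $u$, not just the spectrum of $v$.

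\emph{The route via $z=\lambda h-U_{h^{1/2}}v$.} Here $z$ is positive for $\lambda\geq\norm v$, because $U_{h^{1/2}}v\leq\norm v\,h$. But the bound $\norm{v+z}\leq\lambda+\norm{v-U_{h^{1/2}}v}$ closes the argument only if $\norm{v-U_{h^{1/2}}v}$ is smaller than $\delta$ minus the other errors. That is a norm condition. The fact that $\varphi(v-U_{h^{1/2}}v)\to0$ as $\varphi(h)\to1$ does not imply it, and a bound merely uniform in $\lambda$ is of no use. For instance, in $c_0$ take $\varphi$ the first coordinate, $u=e_1$, $v=10e_2$, $h=e_1$. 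Then $\varphi(h)=1$ and $\delta=1$, but $\norm{v-U_{h^{1/2}}v}=10$. Your estimates give only $\norm{v+z}\leq\lambda+10$ against $\varphi(u+z)=\lambda+1$, so the argument as planned does not close. It could be repaired by taking $h$ from an increasing approximate identity of $C$, so that $U_{h^{1/2}}v\to v$ in norm while $\varphi(h)\to1$. However, the existence of such Jordan approximate identities and that norm convergence are extra ingredients not in your plan.

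\textbf{What the paper does instead.} The paper needs neither a separating state nor an approximate unit. Put $t=\norm{u+v}$ and $h=(u+v)/t$. The elements $z_n=th^{1/n}-v$ lie in $C_+$ because $h^{1/n}\geq h$, and $v+z_n=th^{1/n}$ has norm exactly $t$. The hypothesis then gives $0\leq u+th^{1/n}-v\leq(t+r)1$. Pass to the weak-star limit $h^{1/n}\to p=s(h)$ and compress by $U_p$, which fixes $u$ and $v$ since both are dominated by $th$. This yields $u-v\leq rp\leq r1$. The missing idea is to localize with the support of $u+v$, rather than with a state or with $v$ alone.
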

\begin{proof}
If $u-v\leq r1$, then
$0\leq u+z\leq v+z+r1\leq(\norm{v+z}+r)1$,
which proves the forward implication.
Conversely, the case $u+v=0$ is immediate. Otherwise put
\[
 t=\norm{u+v},\qquad h=(u+v)/t,\qquad p=s(h)\in C^{**}.
\]
Here $s(h)$ denotes the \emph{support projection} of $h$ in the
JBW-algebra $C^{**}$: it is the smallest projection $p$ such
that $p\circ h=h$, equivalently $U_ph=h$.
It is also the spectral projection $\chi_{(0,\infty)}(h)$.
Since $0\leq h\leq1$, spectral calculus and monotone
convergence in a JBW-algebra give
\[
 h^{1/n}\uparrow p,\qquad
 h^{1/n}\longrightarrow p\quad\text{in }\sigma(C^{**},C^*).
\]
Indeed, every positive normal functional $\omega$ satisfies
$\omega(h^{1/n})\uparrow\omega(p)$; the predual is spanned
by its positive functionals. See \cite{HS,Shultz}.
Functional calculus gives
\[
 z_n=th^{1/n}-v\geq th-v=u\geq0,
 \qquad z_n\in C,\qquad\norm{v+z_n}=t.
\]
Hence the assumed inequalities imply
\[
 0\leq u+th^{1/n}-v\leq(t+r)1.
\]
The positive cone of $C^{**}$ is weak-star closed. Thus the
preceding weak-star convergence gives
\[
 0\leq u+tp-v\leq(t+r)1.
\]
\enlargethispage{\baselineskip}
Apply the positive map $U_p$. Since
$U_pu=u$, $U_pv=v$, $U_pp=p$, and $U_p1=p$, we obtain
\[
 u+tp-v\leq(t+r)p.
\]
It follows that $u-v\leq rp\leq r1$.
\end{proof}

In the $C^*$-algebra case, a faithful normal representation of
the bidual also gives $h^{1/n}\to s(h)$ in the strong operator
topology. The weak-star argument above is sufficient for the
order inequality and applies to arbitrary JBW-algebras,
without requiring such an operator representation.

The next corollary extends Hatori and Oi's distance formula
\cite[Proposition 3.2]{HO} from $C^*$-algebras to JB-algebras.

\begin{corollary}\label{cor:distance}
For $u,v\in C_+$, where $C$ is a JB-algebra,
\begin{equation}\label{eq:distance}
 \norm{u-v}=\sup_{z\in C_+}
       \left|\norm{u+z}-\norm{v+z}\right|.
\end{equation}
\end{corollary}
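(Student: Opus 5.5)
The plan is to prove the two inequalities separately, taking Lemma~\ref{lem:order} as the main input.

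For ``$\geq$'': fix $z\in C_+$. By the triangle inequality,
\[
 \norm{u+z}\leq\norm{v+z}+\norm{u-v},\qquad
 \norm{v+z}\leq\norm{u+z}+\norm{u-v}.
\]
Hence $\left|\norm{u+z}-\norm{v+z}\right|\leq\norm{u-v}$ for every $z$. Taking the supremum over $z\in C_+$ gives this direction.

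For ``$\leq$'': let $r$ denote the supremum on the right of \eqref{eq:distance}. It is finite, at most $\norm{u-v}$, by the previous paragraph.
\begin{itemize}
\item[(1)] By definition of $r$, we have $\norm{u+z}\leq\norm{v+z}+r$ for all $z\in C_+$. The converse implication of Lemma~\ref{lem:order} then gives $u-v\leq r1$.
\item[(2)] Interchanging the roles of $u$ and $v$ gives $v-u\leq r1$ in the same way.
\end{itemize}
Together these say $-r1\leq u-v\leq r1$, where the inequalities are read in $C^{**}$ when $C$ is nonunital.

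It remains to convert this two-sided order bound into the norm bound $\norm{u-v}\leq r$. This is standard in a unital JB-algebra, applied here to $C^{**}$. For a self-adjoint $x$, the order interval condition $-r1\leq x\leq r1$ says exactly that the spectrum of $x$, computed via continuous functional calculus in the associative subalgebra generated by $x$ and $1$, lies in $[-r,r]$. By the JB-algebra norm identity, this gives $\norm{x}\leq r$. Since the canonical embedding $C\to C^{**}$ is isometric and order preserving, we obtain $\norm{u-v}\leq r$.

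The only point needing care is this final step: the order-unit property of the bidual norm and its compatibility with the embedding. Both are covered by the structure facts recalled in the preliminaries, so I do not expect a genuine obstacle. All the substantive work is already contained in Lemma~\ref{lem:order}.
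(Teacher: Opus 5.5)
Your proof is correct and is the paper's argument: the reverse triangle inequality gives $\sup_{z\in C_+}\left|\norm{u+z}-\norm{v+z}\right|\leq\norm{u-v}$, and Lemma~\ref{lem:order} applied in both directions with $r$ equal to the supremum gives $-r1\leq u-v\leq r1$ in $C^{**}$. The order-unit norm then yields $\norm{u-v}\leq r$.
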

\begin{proof}
The supremum $d$ is at most $\norm{u-v}$ by the reverse triangle
inequality. Lemma~\ref{lem:order} applied in both directions gives
$-d1\leq u-v\leq d1$, hence $\norm{u-v}\leq d$.
\end{proof}

\begin{remark}\label{rem:opentests}
If $C$ is unital, the test elements $z$ in Lemma~\ref{lem:order}
and Corollary~\ref{cor:distance} may be restricted to $C_{++}$,
the positive invertible cone. Indeed, $z+\varepsilon1\in C_{++}$
for $z\in C_+$, and the relevant expressions are norm continuous.
\end{remark}

\section{Quantitative rigidity of positive cones}

We give the signed-valued, raywise formulation needed in the FM
proof, and obtain exact rigidity by setting the error to zero.
The formulation is related to the boundary-localization method of
Dong, Leung, and Li \cite[Proposition~3.1]{DLL}; our proof uses
norming states and asymptotic distances and is self-contained for
arbitrary JB-algebras. We first verify the state-space facts for
nonunital JB-algebras and the applicability of their abstract theorem.
No associative representation of the whole algebra is required.
Throughout this section, $1$ denotes the
unit of $C^{**}$, and positive functionals on $C$ are evaluated there
through their canonical normal extensions.

For a nonzero JB-algebra $C$, set
\[
 Q(C)=\{\omega\in C^*_+:\norm{\omega}\leq1\}.
\]
This is weak-star compact: positivity is given by the weak-star
closed inequalities $\omega(x)\geq0$ for $x\in C_+$, and the
closed dual unit ball is weak-star compact.
If $b\in C_+$ and $\norm b=1$, define
\[
 K_b=\{\omega\in Q(C):\omega(b)=1\}.
\]
We explain why $K_b$ is nonempty even when $C$ is nonunital.
In the unital associative JB-subalgebra generated by $b$ and
$1$ in $C^{**}$, evaluation at the spectral value $1$ defines
a norm-one functional $\rho$ with $\rho(b)=\rho(1)=1$.
The real Hahn--Banach theorem extends $\rho$ to a norm-one
functional $\widetilde\rho$ on $C^{**}$ with
$\widetilde\rho(1)=1$. This extension is positive: if
$0\leq x\leq1$, the order-unit norm gives $\norm{1-x}\leq1$,
so
\[
 \widetilde\rho(x)=1-\widetilde\rho(1-x)\geq0.
\]
Scaling proves positivity on the whole positive cone.
Its restriction $\omega$ to $C$ has norm at most one, and
$\omega(b)=1$ forces $\norm\omega=1$. Thus $\omega\in K_b$.
The functional calculus and order-unit norm used here are
recalled in \cite[Sections 2.1--2.2]{LRW}.
It follows that $K_b$ is nonempty and weak-star compact, and
every element of $K_b$ is a state. In particular, every nonzero
positive element of $C$ admits a norming state.

For completeness, for every $z\in C$ one has
\[
 \max_{\omega\in Q(C)}\omega(z)=\norm{z_+}.
\]
Indeed, the upper bound follows from
$\omega(z)\leq\omega(z_+)\leq\norm{z_+}$.
If $m=\norm{z_+}>0$, put $b=z_+/m$ and choose
$\omega\in K_b$. Functional calculus in $C^{**}$ gives
\[
 0\leq z_-\leq\norm{z_-}(1-b).
\]
The canonical normal extension of $\omega$ satisfies
$\omega(1)=\norm\omega=\omega(b)=1$, so $\omega(z_-)=0$
and $\omega(z)=m$. If $m=0$, the zero functional attains
the upper bound zero. This also explains the use of $Q(C)$
rather than just the state space.

\subsubsection*{Applicability of the Dong--Leung--Li theorem}
Let $K$ be the smallest weak-star closed subset of $Q(C)$ that norms
$C_+$, and put $J=K\cap S_{C^*}^+$, as in \cite[Theorem~2.1]{DLL}.
These sets are distinct from the norming faces $K_b$ above.
We verify that $C$ is a good ordered Banach space in their terminology
and that $J$ norms the whole space with constant $1$.

For positive functionals $\phi,\psi$, their canonical extensions to
$C^{**}$ give
\[
 \norm{\phi+\psi}=\phi(1)+\psi(1)=\norm\phi+\norm\psi.
\]
Consequently every norm-one element of $Q(C)$ is order maximal there.
Every nonzero extreme point of $Q(C)$ has norm one, since an element
of norm strictly between zero and one is a nontrivial convex
combination of zero and its normalization.
By \cite[Theorem~2.1]{DLL}, these extreme points belong to $J$.
For $z\ne0$, choose $\sigma\in\{-1,1\}$ with
$\norm{(\sigma z)_+}=\norm z$. The nonempty weak-star compact face
$\{\omega\in Q(C):\omega(\sigma z)=\norm z\}$ has an extreme point
that is also a nonzero extreme point of $Q(C)$. Hence
\[
 \norm z=\sup_{\omega\in J}|\omega(z)|.
\]
The norm additivity above also proves their condition (HU2).
Condition (HU3) follows from $|z|=(z^2)^{1/2}\geq z,-z$ and
$\norm{|z|}=\norm z$.

For (HU1), fix $b\in C_+$ of norm one and $x\in C$.
For sufficiently large $n$, the norm of $nb+x$ is attained on the
positive side of its spectrum: its negative part has norm at most
$\norm x$, whereas $\norm{nb+x}\geq n-\norm x$.
The preceding extreme-face argument supplies $\omega_n\in J$ with
$\omega_n(nb+x)=\norm{nb+x}$. Then $\omega_n(b)\to1$ and
\[
 \norm{nb+x}-n
 =n(\omega_n(b)-1)+\omega_n(x)\leq\omega_n(x).
\]
Every weak-star cluster point is in $K$ and takes value $1$ at $b$,
so it belongs to $J$. Taking a subsequence realizing the upper limit
and a further weak-star convergent subnet gives the upper bound in
\[
 \lim_{n\to\infty}(\norm{nb+x}-n)
 =\max\{\omega(x):\omega\in J,\ \omega(b)=1\}.
\]
The lower bound follows by evaluating $nb+x$ at any member of the
nonempty compact set $K\cap\{\omega:\omega(b)=1\}\subset J$.
This is (HU1). Finally, $C_+$ is generating and $Q(C)$ norms it.
Thus all the hypotheses in \cite[Theorem~5.2]{DLL} hold for arbitrary
JB-algebras with norming constant $1$; the extra alternative there
is satisfied by $C_F=1$. The zero algebra requires no argument.

\begin{lemma}\label{lem:signedasymptotic}
Let $C$ be a nonzero JB-algebra and let $b\in C_+$ have norm one.
Suppose $x\in C$ and $\gamma\in\mathbb R$ satisfy
$\omega(x)=\gamma$ for every $\omega\in K_b$.
If $w_n\in C$, $r_n=\norm{w_n}\to\infty$, and $w_n/r_n\to b$, then
\[
 \norm{w_n+\tau x}-r_n\longrightarrow\tau\gamma
 \qquad(\tau\in\{-1,1\}).
\]
\end{lemma}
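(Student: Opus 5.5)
The plan is to prove matching upper and lower bounds for $\norm{w_n+\tau x}-r_n$. In both directions we evaluate at suitable functionals in $Q(C)$ and pass to weak-star cluster points. Those cluster points take the value $1$ at $b$, so they lie in $K_b$, where the hypothesis forces $\omega(x)=\gamma$. Throughout, fix $\tau\in\{-1,1\}$ and put $z_n=w_n+\tau x$.

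\emph{Step 1: the norm is attained on the positive side.} We use the formula $\max_{\omega\in Q(C)}\omega(z)=\norm{z_+}$ established before the lemma. Applying it to $-w_n$ gives
\[
 \norm{(w_n)_-}=\max_{Q(C)}\omega(-w_n)\leq r_n\max_{Q(C)}\omega(b-w_n/r_n)\leq r_n\norm{b-w_n/r_n},
\]
because $\omega(b)\geq0$. The right-hand side is $o(r_n)$. Applying the formula to $-z_n$ gives $\norm{(z_n)_-}\leq\norm{(w_n)_-}+\norm x=o(r_n)$. On the other hand $\norm{z_n}\geq r_n-\norm x$, and $\norm{z_n}=\max\{\norm{(z_n)_+},\norm{(z_n)_-}\}$ by functional calculus. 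Hence for large $n$ we have $\norm{z_n}=\norm{(z_n)_+}$ and $\norm{w_n}=\norm{(w_n)_+}$. Consequently there are $\omega_n,\omega_n'\in Q(C)$ with
\[
 \omega_n(z_n)=\norm{z_n},\qquad \omega_n'(w_n)=r_n .
\]

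\emph{Step 2: upper bound.} Since $\omega_n(w_n)\leq r_n$, we get
\[
 \norm{z_n}-r_n\leq\omega_n(z_n)-\omega_n(w_n)=\tau\,\omega_n(x).
\]
For the limit of $\omega_n(b)$ we use
\[
 \omega_n(w_n/r_n)\geq(\norm{z_n}-\norm x)/r_n\to1
\]
and $|\omega_n(b)-\omega_n(w_n/r_n)|\leq\norm{b-w_n/r_n}\to0$. Together with $\omega_n(b)\leq1$ this gives $\omega_n(b)\to1$. Now choose a subsequence realizing $\limsup(\norm{z_n}-r_n)$, and then a weak-star convergent subnet with limit $\omega\in Q(C)$, which exists by compactness of $Q(C)$. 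The limit satisfies $\omega(b)=1$, so $\omega\in K_b$ and $\omega(x)=\gamma$. Therefore $\limsup(\norm{z_n}-r_n)\leq\tau\gamma$.

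\emph{Step 3: lower bound.} We have
\[
 \norm{z_n}\geq\omega_n'(z_n)=r_n+\tau\,\omega_n'(x).
\]
Also $\omega_n'(b)\to1$, since $\omega_n'(w_n/r_n)=1$ and $w_n/r_n\to b$. The same subsequence and subnet argument, now applied to a subsequence realizing the $\liminf$, gives $\liminf(\norm{z_n}-r_n)\geq\tau\gamma$. Combining Steps 2 and 3 proves the lemma.

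The main obstacle is Step 1. The hypothesis $w_n/r_n\to b$ only controls $w_n-r_nb$ up to $o(r_n)$, not up to a bounded error, so we cannot simply replace $w_n$ by $r_nb$. The argument avoids this by never using that replacement: it uses only the exact identity $\omega_n'(w_n)=r_n$ and the bound $\omega_n(w_n)\leq r_n$. The normalized convergence is needed only to force the functionals toward $K_b$, and to ensure that the norms are attained on the positive side for large $n$, which in a Jordan setting we control through the $Q(C)$-max formula rather than through associative estimates.
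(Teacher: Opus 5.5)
Your proof is correct and follows essentially the same route as the paper: both arguments sandwich $\norm{w_n+\tau x}-r_n$ between $\tau\omega_n'(x)$ and $\tau\omega_n(x)$, where these are norming functionals for $w_n$ and $w_n+\tau x$, show that both functionals take values at $b$ tending to $1$, and then use weak-star compactness so that every cluster point lies in $K_b$. The only difference is cosmetic: you bound the negative parts through the $Q(C)$-maximum formula, whereas the paper uses the equivalent order estimate $w_n\geq-\delta_n r_n 1$ in $C^{**}$ with $\delta_n=\norm{w_n/r_n-b}$.
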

\begin{proof}
Put $\delta_n=\norm{w_n/r_n-b}$. In $C^{**}$ we have
$w_n\geq-\delta_nr_n1$ and
$w_n+\tau x\geq-(\delta_nr_n+\norm x)1$.
For large $n$, $r_n-\norm x>\delta_nr_n+\norm x$.
Thus the norms of both elements are attained on the positive side
of the spectrum. The state-space formula above gives states $\psi_n,\eta_n$ with
\[
 \psi_n(w_n)=r_n,\qquad
 \eta_n(w_n+\tau x)=\norm{w_n+\tau x}.
\]
Consequently
\[
 \tau\psi_n(x)\leq\norm{w_n+\tau x}-r_n
 \leq\tau\eta_n(x).
\]
Moreover $\psi_n(b)\geq1-\delta_n$ and
$\eta_n(b)\geq1-2\norm x/r_n-\delta_n$.
Both values tend to one. Every weak-star cluster point belongs to
$K_b$, so weak-star compactness implies
$\psi_n(x),\eta_n(x)\to\gamma$. Indeed, any subnet on which either scalar convergence failed would
have a further weak-star convergent subnet with limit in $K_b$,
a contradiction. The displayed inequalities prove the claim.
\end{proof}

The next theorem uses a prescribed raywise model and permits signed
values. The boundary test directions used in
\cite[Proposition~3.1, equation~(3.3)]{DLL} can also be adapted to
this situation: norming boundary functionals localize near a fixed
functional, while the raywise assumption makes the norms of the
large signed values attain their positive spectral side.
We give instead a direct proof using the faces $K_b$ and a
one-variable convex function. This formulation will apply to the
signed difference maps arising from the FM identity.

\begin{theorem}[Quantitative rigidity]\label{thm:quantitative}
Let $C$ be a JB-algebra and let $\varepsilon\geq0$.
Suppose $F:C_+\to C$ satisfies $F(0)=0$,
\begin{equation}\label{eq:quantneariso}
 \big|\norm{F(x)-F(y)}-\norm{x-y}\big|\leq\varepsilon
 \qquad(x,y\in C_+),
\end{equation}
and, for every $b\in C_+$,
\begin{equation}\label{eq:quantray}
 \frac{\norm{F(nb)-nb}}n\longrightarrow0.
\end{equation}
Then
\begin{equation}\label{eq:quantbound}
 \sup_{x\in C_+}\norm{F(x)-x}\leq2\varepsilon.
\end{equation}
If also $\norm{F(x)}=\norm x$ for all $x\in C_+$, the bound improves
to $\varepsilon$. Both constants are optimal for the stated classes.
\end{theorem}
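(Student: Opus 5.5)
The plan is to bound $\norm{F(x)-x}$ through the state-space formula $\max_{\omega\in Q(C)}\omega(z)=\norm{z_+}$ applied to $z=\pm(F(x)-x)$. We compare $F(x)$ with $F$ evaluated far out along positive rays $nb$, where \eqref{eq:quantray} makes $F(nb)$ asymptotically the model $nb$. Throughout, fix $x\in C_+$ and $b\in C_+$ with $\norm b=1$, and put $w_n=F(nb)$. By \eqref{eq:quantneariso} with $y=0$ we have $\bigl|\norm{w_n}-n\bigr|\leq\varepsilon$, and by \eqref{eq:quantray} $w_n/n\to b$. So the hypotheses of Lemma~\ref{lem:signedasymptotic} on $w_n$ hold with $r_n=\norm{w_n}$, and the proof of that lemma shows that the norms of $w_n$ and of $w_n\pm x$, $w_n\pm F(x)$ are attained on the positive spectral side, by states whose cluster points lie in $K_b$.

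\emph{Lower estimate.} Choose states $\eta_n$ with $\eta_n(w_n)=\norm{w_n}\geq n-\varepsilon$. Then
\[
 \eta_n(w_n)-\eta_n(F(x))\leq\norm{w_n-F(x)}\leq\norm{nb-x}+\varepsilon,
\]
so $\eta_n(F(x))\geq-(\norm{nb-x}-n)-2\varepsilon$. By (HU1), verified above for JB-algebras (or directly by the argument of Lemma~\ref{lem:signedasymptotic}), $\norm{nb-x}-n\to\max_{\omega\in K_b}(-\omega(x))$. Every weak-star cluster point $\eta$ of $(\eta_n)$ lies in $K_b$, and we obtain
\[
 \eta(F(x))\geq\min_{\omega\in K_b}\omega(x)-2\varepsilon
\]
for some $\eta\in K_b$. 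For the reverse inequality, run the same argument with states norming $w_n-F(x)$, and use $\norm{w_n-F(x)}\geq\norm{nb-x}-\varepsilon$ together with $\norm{w_n}\leq n+\varepsilon$. This produces some $\eta'\in K_b$ with $\eta'(F(x))\leq\max_{K_b}\omega(x)+2\varepsilon$. If also $\norm{F(y)}=\norm y$, then $\norm{w_n}=n$ exactly, one $\varepsilon$ disappears, and the constant becomes $\varepsilon$.

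\emph{Localization (main obstacle).} The estimates above control $F(x)$ only at some functional of $K_b$ and against the spread of $x$ over $K_b$. To conclude, we must choose $b$ so that both $x$ and $F(x)$ are nearly constant on $K_b$, and so that $K_b$ reaches a state norming $(\pm(F(x)-x))_+$. I would try first the following choice. Set $z=F(x)-x$ and take an extreme norming state $\omega_0$ of $z_+$. Take $b$ to be a normalized spectral piece of a positive element built from $z_+$, for instance a function of $z_+$ concentrated near its top spectral value. Then $K_b$ lies within a small weak-star neighbourhood of the face of $\omega_0$, and both $\omega(x)$ and $\omega(F(x))$ vary by at most $\delta$ on $K_b$. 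Here the one-variable convex function $t\mapsto\norm{w_n+tx}-n$, together with its asymptotic slopes $\max_{K_b}\omega(x)$ and $\min_{K_b}\omega(x)$ at $\pm\infty$, is what lets one pass from the two one-sided estimates to an estimate at a common state. Letting $\delta\to0$ then gives $\norm{z_+}\leq2\varepsilon$, and symmetrically $\norm{z_-}\leq2\varepsilon$; this yields \eqref{eq:quantbound}. Making this localization rigorous without an associative representation of $C$ is the delicate step. I expect to use the compression $U_p$ by support projections in $C^{**}$, as in Lemma~\ref{lem:order}, to replace associative spectral cut-offs.

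\emph{Optimality.} Finally, sharpness of both constants will be checked on $C=\mathbb R$, or on $\mathbb R^2$ with the max norm. For the general class, take $F(x)=x+2\varepsilon$ on a bounded initial segment, glued continuously to the identity. For the norm-preserving class, use a map displacing points by $\varepsilon$; in both cases one verifies that \eqref{eq:quantneariso} and \eqref{eq:quantray} hold while the displacement attains the bound.
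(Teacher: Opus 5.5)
\textbf{There is a genuine gap: the localization step is missing, and the mechanism you propose for it does not work.}

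Your two one-sided estimates are correct; the reverse one even holds with $\min_{K_b}\omega(x)$ in place of $\max_{K_b}\omega(x)$. But a cut-off $b$ of $z_+$ near its top spectral value controls only $z=F(x)-x$ on $K_b$. It says nothing about $x$ and $F(x)$ separately. For example, on $C=\mathbb R^2$ with the maximum norm, $F(y)=y+\varepsilon(1,1)$ for $y\neq0$ satisfies all the hypotheses. For $x=(0,5)$ one has $z_+=\varepsilon(1,1)$, every such $b$ equals $(1,1)$, $K_b$ is the whole state space, and $\omega(x)$ ranges over $[0,5]$. In general, for $b=z_+/\norm{z_+}$ the spread of $x$ over $K_b$ is exactly the kink at $t=0$ of the convex function $t\mapsto\norm{(z+tx)_+}$. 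Its one-sided derivatives there are $\min_{K_b}\omega(x)$ and $\max_{K_b}\omega(x)$.

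The idea you are missing is the paper's: move off the kink. Put $d=\sigma(F(u)-u)$, $s=\norm{d_+}>0$ and $g(t)=\norm{(d+tu)_+}$. Pick a differentiability point $0<t<s/(2\norm u)$ and put $b=(d+tu)_+/g(t)$. Every $\omega\in K_b$ satisfies $\omega(d+tu)=g(t)$. The inequality $g(t+h)\geq g(t)+h\omega(u)$ for both signs of $h$ then forces $\omega(u)=g'(t)$. Hence $u$ and $F(u)$ take constant values $\alpha,\beta$ on $K_b$. Lemma~\ref{lem:signedasymptotic} gives $n-\norm{nb-u}\to\alpha$ and $\norm{F(nb)}-\norm{F(nb)-F(u)}\to\beta$, so $|\beta-\alpha|\leq2\varepsilon$ by \eqref{eq:quantneariso}. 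Finally $s\leq g(t)\leq|\beta-\alpha|+t\norm u$, and letting $t\downarrow0$ concludes.

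\textbf{A repair closer to your setup.} You can also avoid localization, because the cluster state you produce already lies in $K_b$. Take $b=z_+/\norm{z_+}$, so that $\omega(F(x)-x)=\norm{z_+}$ on $K_b$. In the reverse estimate, use $\norm{nb-x}\geq n-\omega_0(x)$ for an arbitrary $\omega_0\in K_b$ instead of the asymptotic formula. Then choose $\omega_0=\eta'$; this gives $\norm{z_+}=\eta'(F(x)-x)\leq2\varepsilon$.

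For $z_-$, the test points $F(nb)$ only see $\min_{K_b}\omega(x)$, but states $\psi_n$ norming $F(nb+x)$ work. Here $b=z_-/\norm{z_-}$, and $F(nb+x)/n\to b$ because $\norm{F(nb+x)-F(nb)}\leq\norm x+\varepsilon$. Then
\[
 n+\omega_0(x)-\varepsilon\leq\norm{F(nb+x)}
 \leq\norm{F(nb+x)-F(x)}+\psi_n(F(x))\leq n+\varepsilon+\psi_n(F(x)),
\]
so a cluster point $\psi\in K_b$ satisfies $\norm{z_-}=\psi(x-F(x))\leq2\varepsilon$. Under exact norm preservation one $\varepsilon$ disappears in each case.

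\textbf{Sharpness.} Your sketch fails as stated. On $\mathbb R$ with $F(0)=0$, a value $F(x)=x+2\varepsilon$ violates \eqref{eq:quantneariso} at the pair $(x,0)$. The paper's Example~\ref{ex:quantsharp} spends only $\varepsilon$ on a translation by $\varepsilon(1,1)$, which cancels between nonzero points. It gains the second $\varepsilon$ from the norm-preserving nonexpansive map $H$.
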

\begin{proof}
The zero algebra and $u=0$ require no argument. Fix $u\in C_+$,
$u\ne0$, and put $v=F(u)$. For a sign $\sigma$ with
$s=\norm{(\sigma(v-u))_+}>0$, put $d=\sigma(v-u)$ and
$g(t)=\max_{\omega\in Q(C)}\omega(d+tu)$.
This function is convex, nondecreasing, and $\norm u$-Lipschitz.
A finite convex function on the real line is differentiable except
at at most countably many points. Choose a differentiability point $0<t<s/(2\norm u)$ and set
\[
 m=g(t),\qquad \alpha=g'(t),\qquad b=(d+tu)_+/m.
\]
Then $m\geq s$, $0\leq\alpha\leq\norm u$, and $\norm b=1$.
For $\omega\in K_b$ we have $\omega((d+tu)_+)=m$.
Functional calculus gives
\[
 0\leq(d+tu)_-\leq\norm{(d+tu)_-}(1-b).
\]
Since $\omega(1)=\omega(b)=1$, we get $\omega((d+tu)_-)=0$,
and therefore $\omega(d+tu)=m$.
For both signs of $h$, the inequality
$g(t+h)\geq m+h\omega(u)$ and differentiability give
\[
 \omega(u)=\alpha,\qquad
 \omega(v)=\alpha+\sigma(m-t\alpha)=:\beta
 \quad(\omega\in K_b).
\]
In particular $s\leq|\beta-\alpha|+t\norm u$.
Let $w_n=F(nb)$ and $r_n=\norm{w_n}$.
Equation~\eqref{eq:quantneariso} gives $|r_n-n|\leq\varepsilon$,
and \eqref{eq:quantray} implies $w_n/r_n\to b$.
Lemma~\ref{lem:signedasymptotic} yields
\[
 n-\norm{nb-u}\to\alpha,\qquad
 r_n-\norm{F(nb)-v}\to\beta.
\]
The difference between these two expressions is at most
$2\varepsilon$ in absolute value, so $|\beta-\alpha|\leq2\varepsilon$.
For each fixed $t$ we first take $n\to\infty$; then we let $t\downarrow0$
through differentiability points. This gives $s\leq2\varepsilon$.
Treating both signs, including the trivial case $s=0$, proves
\eqref{eq:quantbound}. If norms are preserved, $r_n=n$ and the same
argument gives $|\beta-\alpha|\leq\varepsilon$.
Optimality follows from Example~\ref{ex:quantsharp} below.
\end{proof}

\begin{corollary}[Exact rigidity]\label{thm:rigidity}
Let $C$ be a JB-algebra. Suppose $F:C_+\to C_+$ is an isometric
embedding, $F(0)=0$, and
\begin{equation}\label{eq:bounded}
 M:=\sup_{x\in C_+}\norm{F(x)-x}<\infty.
\end{equation}
Then $F=\id_{C_+}$, without any surjectivity assumption.
\end{corollary}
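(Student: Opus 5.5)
The plan is to deduce the corollary from Theorem~\ref{thm:quantitative} with $\varepsilon=0$, after a scaling step. Regard $F$ as a map $C_+\to C$. Since $F$ is an isometric embedding, condition \eqref{eq:quantneariso} holds with $\varepsilon=0$, and $F(0)=0$ is assumed. Condition \eqref{eq:quantray} follows from the bounded displacement \eqref{eq:bounded}: for every $b\in C_+$,
\[
 \frac{\norm{F(nb)-nb}}{n}\leq\frac{M}{n}\longrightarrow0.
\]
Theorem~\ref{thm:quantitative} then gives $\sup_{x\in C_+}\norm{F(x)-x}\leq2\cdot0=0$, that is, $F(x)=x$ for all $x\in C_+$. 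The zero algebra is trivial. The nonunital case needs no separate treatment, because Theorem~\ref{thm:quantitative} is stated for arbitrary JB-algebras. No surjectivity enters, since the theorem assumes none.

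The step to check is whether the hypothesis in Theorem~\ref{thm:quantitative} is exactly what \eqref{eq:bounded} supplies. Bounded displacement is much stronger than the sublinear raywise condition \eqref{eq:quantray}, so the reduction is immediate. I therefore expect no real obstacle beyond this verification. It also matters that $F$ maps into $C_+\subset C$, so the signed-valued formulation applies without change.

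As an alternative, one can use the scaling argument mentioned in the introduction. Put $F_k(x)=F(kx)/k$. Each $F_k$ is again a zero-fixing isometric embedding, and its displacement is bounded by $M/k$. Applying \cite[Theorem~5.2]{DLL} with $C_F=1$ would give linear isometries $L_k$ with $\norm{F_k(x)-L_k(x)}\leq2\varepsilon$ and $\varepsilon=0$, and comparing with the identity at distance $M/k$ forces $L_k=\id$. The direct appeal to Theorem~\ref{thm:quantitative} is cleaner, and it is the route I would write.
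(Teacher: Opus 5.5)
Your main route is the paper's own proof: bounded displacement gives the raywise condition \eqref{eq:quantray}, and Theorem~\ref{thm:quantitative} with $\varepsilon=0$ then forces $F=\id_{C_+}$. Your alternative via \cite[Theorem~5.2]{DLL} also works, but it needs no rescaling $F_k$, and you should state that bounded displacement makes $F$ $M$-surjective, which is the approximate surjectivity that theorem requires; then linearity of the resulting isometry $\Phi$ together with $n\norm{\Phi(b)-b}\leq M$ gives $\Phi=\id$, as the paper notes.
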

\begin{proof}
Bounded displacement implies \eqref{eq:quantray}. Apply
Theorem~\ref{thm:quantitative} with $\varepsilon=0$.
Alternatively, the verification above makes
\cite[Theorem~5.2]{DLL} applicable: $F$ is $M$-surjective, since
$\norm{F(y)-y}\leq M$ for every $y\in C_+$.
At zero error it is the restriction of a cone-preserving surjective
linear isometry $\Phi:C\to C$. For $b\in C_+$,
$n\norm{\Phi(b)-b}=\norm{F(nb)-nb}\leq M$, so $\Phi(b)=b$.
Thus the corollary is also a consequence of the theorem of Dong, Leung, and Li for every JB-algebra.
\end{proof}

The optimal value $2$ already occurs in
\cite[Corollary~5.3(3) and Example~5.5]{DLL}; see also
\cite{SV,Sun,V}. Their three-dimensional maximum-norm example
has bounded displacement and therefore satisfies our raywise
condition. The following two-dimensional example illustrates
sharpness for the stated classes, including the norm-preserving
refinement.

\begin{example}[Sharpness of quantitative rigidity]\label{ex:quantsharp}
Let $C=\mathbb R^2$ with coordinatewise product and the maximum norm,
and fix $\varepsilon>0$. For $a=(s,t)\in C_+$ put
\[
 H(s,t)=(s,\max\{t,\min(s,\varepsilon)\}),\qquad
 F(a)=
 \begin{cases}0,&a=0,\\H(a)+\varepsilon(1,1),&a\ne0.\end{cases}
\]
The map $H$ is nonexpansive, preserves norms, and
$H(a)-a=(0,q(a))$ with $0\leq q(a)\leq\varepsilon$.
Hence
\[
 \norm{a-b}_\infty-\varepsilon
 \leq\norm{H(a)-H(b)}_\infty\leq\norm{a-b}_\infty.
\]
For two nonzero inputs the translation cancels in the difference.
For a nonzero input and zero, the norm increases by exactly
$\varepsilon$. Thus $F$ satisfies \eqref{eq:quantneariso}.
Its displacement is at most $2\varepsilon$, and
\[
 F(\varepsilon,0)=(2\varepsilon,2\varepsilon),\qquad
 \norm{F(\varepsilon,0)-(\varepsilon,0)}_\infty=2\varepsilon.
\]
This proves sharpness of $2$. The map $H$ itself proves sharpness of
$1$ under exact norm preservation. The first example is discontinuous
at zero; it asserts optimality in the class of maps in the theorem,
which has no continuity assumption.
\end{example}

\section{From the FM equation to additivity}

Throughout this section, $T:A_+\to B_+$ is surjective and satisfies \eqref{eq:FM}. Define
\[
 f(a)=\norm{T(a)},\qquad
 d(a,b)=\norm{T(a)-T(b)}\qquad(a,b\in A_+).
\]
The function $d$ is a pseudometric; no injectivity is assumed.

The proofs of $T(0)=0$ and order preservation below follow the
$C^*$-algebra arguments of Hirota and Oppekepenguin
\cite[Lemmas~3.1--3.2]{HH}; we include them in the present
JB-algebra setting.

\begin{lemma}\label{lem:prelim}
We have $T(0)=0$, and $T$ is order preserving. The scalar function $f$ is monotone, subadditive, and positively homogeneous.
\end{lemma}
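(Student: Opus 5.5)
We first show $T(0)=0$. Since $T$ is surjective, pick $a_0\in A_+$ with $T(a_0)=0$. Apply \eqref{eq:FM} with $a=a_0$, $b=0$:
\[
\norm{T(a_0)}=\norm{T(a_0)+T(0)},\qquad\text{so}\qquad 0=\norm{T(0)},
\]
and hence $T(0)=0$. Next, \eqref{eq:FM} with $b=a$ gives $f(2a)=\norm{2T(a)}=2f(a)$. Subadditivity follows from the triangle inequality:
\[
f(a+b)=\norm{T(a)+T(b)}\leq f(a)+f(b).
\]

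For order preservation we use Lemma~\ref{lem:order} with $r=0$. Let $a\leq a'$ in $A_+$ and put $c=a'-a\in A_+$. We must show $T(a)\leq T(a')$, that is,
\[
\norm{T(a)+w}\leq\norm{T(a')+w}\qquad(w\in B_+).
\]
By surjectivity, write $w=T(z)$. Then \eqref{eq:FM} gives
\[
\norm{T(a)+T(z)}=f(a+z),\qquad \norm{T(a')+T(z)}=f(a'+z)=f(a+z+c).
\]
So order preservation reduces to monotonicity of $f$. Monotonicity of $f$ in turn amounts to the inequality $\norm{T(x)}\leq\norm{T(x+c)}$ for all $x,c\in A_+$.

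This is the main obstacle. I would follow \cite[Lemma~3.2]{HH}, using surjectivity and the norm characterization of Lemma~\ref{lem:order} iteratively. First write $T(x+c)$ through \eqref{eq:FM}:
\[
\norm{T(x+c)+T(z)}=\norm{T(x)+T(c)+T(z)}\qquad(z\in A_+),
\]
by applying \eqref{eq:FM} twice, to $(x+c,z)$ and then to $(x,c+z)$ and $(c,z)$ combined suitably. More carefully, one shows $\norm{T(x+c)+T(z)}=f(x+c+z)=\norm{T(x)+T(c+z)}$. The element $T(c+z)$ is compared with $T(z)$ via the same identity, aiming at the inequality
\[
\norm{T(x)+w}\leq\norm{T(x+c)+w}\qquad(w\in B_+).
\]
A cleaner route: for any $w=T(z)$ we have $\norm{T(x+c)+T(z)}=\norm{T(x)+T(c+z)}$. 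Lemma~\ref{lem:order} then shows that $T(x+c)-T(c+z)\leq T(x)-T(z)$ in the tested sense, after choosing $z$ with $T(z)$ large along the relevant face.

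I expect this step to need the HH argument adapted with norming states from Section~3. The plan there is to pick $\omega\in K_{b}$ for $b=T(x)/f(x)$ and to evaluate it against $T(x+c)$, using a sequence $T(z_n)$ with $z_n$ chosen, via surjectivity, so that $T(z_n)=n\,T(x)$. The equality $f(x+c+z_n)=\norm{T(x+c)+nT(x)}$ then has to be compared with $\norm{(n+1)T(x)}$. For that comparison we need $f(x+z_n)\leq f(x+c+z_n)$, which is again monotonicity, so the induction must instead start from the case $x=0$, where $f(0)=0\leq f(c)$ holds trivially, and extend to general $x$.

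Finally, positive homogeneity. Monotonicity together with $f(2a)=2f(a)$ and induction gives $f(na)=nf(a)$. From this, $f(qa)=qf(a)$ for rational $q>0$, since $f(ma)=f(n\cdot(m/n)a)$. Monotonicity then squeezes irrational $\lambda$ between rationals, giving $f(\lambda a)=\lambda f(a)$ for all $\lambda\geq0$.
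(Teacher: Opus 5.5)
Most of what you write is correct: the proof of $T(0)=0$, the identity $f(2a)=2f(a)$, subadditivity, and your reduction of order preservation to monotonicity of $f$ via surjectivity and Lemma~\ref{lem:order} with $r=0$. Your $T(0)=0$ argument uses surjectivity, which is not needed: \eqref{eq:FM} with $a=b=0$ gives $\norm{T(0)}=2\norm{T(0)}$.

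The gap is the step you call the main obstacle. You never prove $f(x)\leq f(x+c)$. The route you sketch (norming states in $K_b$, sequences with $T(z_n)=nT(x)$, an induction ``starting from $x=0$'') is circular, as you notice yourself. You give no mechanism for passing from $x=0$ to general $x$. The asserted inequality $T(x+c)-T(c+z)\leq T(x)-T(z)$ ``in the tested sense'' is also not derived. As written, neither monotonicity of $f$ nor order preservation of $T$ is established, and your homogeneity argument also depends on monotonicity.

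The missing observation is a one-line application of \eqref{eq:FM} to the pair $(x,c)$, together with positivity of the values of $T$:
\[
 f(x+c)=\norm{T(x)+T(c)}\geq\norm{T(x)}=f(x).
\]
This holds because $0\leq T(x)\leq T(x)+T(c)$ in $B_+$, and the norm of a JB-algebra is monotone on its positive cone. No states or limits are needed. With this, your reduction closes: for $a\leq a'$ and $w=T(z)$ you get $\norm{T(a)+w}=f(a+z)\leq f(a'+z)=\norm{T(a')+w}$, and Lemma~\ref{lem:order} gives $T(a)\leq T(a')$. The paper uses the same inequality in a single chain, $\norm{T(a')+T(x)}=\norm{T(a+x)+T(c)}\geq\norm{T(a+x)}=\norm{T(a)+T(x)}$, and then reads off monotonicity of $f$ from order preservation.

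A smaller point concerns the homogeneity step. Monotonicity and $f(2a)=2f(a)$ do not by themselves give $f(na)=nf(a)$. For instance, $t\bigl(1+\delta\sin(2\pi\log_2 t)\bigr)$ with small $\delta>0$ is nondecreasing on $(0,\infty)$ and satisfies the doubling identity, but it is not linear. You need the subadditivity you already proved: it gives $f(na)\leq nf(a)$. For $2^k\geq n$, the inequality $2^kf(a)=f(2^ka)\leq f(na)+(2^k-n)f(a)$ gives the reverse. The rational and real extensions then go through as you describe.
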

\begin{proof}
Equation \eqref{eq:FM} at $a=b=0$ gives $T(0)=0$.

Suppose $a\leq b$ and put $c=b-a\in A_+$. For every $x\in A_+$, positivity and \eqref{eq:FM} give
\begin{align*}
 \norm{T(b)+T(x)}
 &=\norm{T(a+c+x)}
 =\norm{T(a+x)+T(c)}\\
 &\geq\norm{T(a+x)}
 =\norm{T(a)+T(x)}.
\end{align*}
Surjectivity and Lemma~\ref{lem:order} with $r=0$ imply $T(a)\leq T(b)$. Hence $f$ is monotone. The triangle inequality yields
\[
 f(a+b)\leq f(a)+f(b),
\]
and \eqref{eq:FM} at $a=b$ gives $f(2a)=2f(a)$.

For an integer $n\geq1$, subadditivity gives $f(na)\leq nf(a)$. Choose $2^k\geq n$. Then
\[
 2^kf(a)=f(2^ka)\leq f(na)+(2^k-n)f(a),
\]
so $f(na)=nf(a)$. It follows that $f(qa)=qf(a)$ for nonnegative rational $q$. For a real $t\geq0$, rational approximation from below and above, together with monotonicity, gives $f(ta)=tf(a)$.
\end{proof}

At this stage only the scalar function $f$ has been shown to be homogeneous. Homogeneity of the map $T$ itself will follow after additivity.

\begin{lemma}\label{lem:translation}
For every $a,b,c\in A_+$,
\begin{equation}\label{eq:translation}
 d(a+c,b+c)=d(a,b).
\end{equation}
\end{lemma}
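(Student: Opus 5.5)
We will compute $d(a+c,b+c)$ through the distance formula of Corollary~\ref{cor:distance}, applied in $C=B$ with $u=T(a+c)$ and $v=T(b+c)$:
\[
 d(a+c,b+c)=\sup_{z\in B_+}\big|\norm{T(a+c)+z}-\norm{T(b+c)+z}\big|.
\]
Surjectivity of $T$ is used at this point: every test element $z\in B_+$ can be written as $z=T(x)$ for some $x\in A_+$. The supremum can therefore be taken over $x\in A_+$ with $z=T(x)$. It does not matter that $x$ is not unique, since only the value $T(x)$ enters.

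Next we apply the FM identity \eqref{eq:FM} twice to each norm. For every $x\in A_+$,
\[
 \norm{T(a+c)+T(x)}=\norm{T(a+c+x)}=\norm{T(a)+T(c+x)},
\]
where the first equality is \eqref{eq:FM} for the pair $(a+c,x)$ and the second is \eqref{eq:FM} for the pair $(a,c+x)$, using associativity of addition in $A_+$. The same computation holds with $b$ in place of $a$. Consequently
\[
 d(a+c,b+c)=\sup_{x\in A_+}\big|\norm{T(a)+T(c+x)}-\norm{T(b)+T(c+x)}\big|.
\]
The elements $T(c+x)$ lie in $B_+$. The reverse triangle inequality bounds each term by $\norm{T(a)-T(b)}=d(a,b)$, which gives $d(a+c,b+c)\leq d(a,b)$.

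The main obstacle is the reverse inequality. The test elements $T(c+x)$ range only over $T(c+A_+)$, which need not be all of $B_+$, so Corollary~\ref{cor:distance} cannot be applied directly to recover $d(a,b)$ from them. To handle this, we first show that the family $T(c+A_+)$ still suffices. Since $T$ is order preserving (Lemma~\ref{lem:prelim}), we have $T(c+x)\geq T(x)$ for every $x$. However, monotonicity in $z$ of $\norm{u+z}-\norm{v+z}$ fails, so we use a different idea: approximate arbitrary test elements by large ones.

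For $z\in B_+$ and $\lambda\geq0$, the function $\lambda\mapsto\norm{u+z+\lambda w}$ is not helpful, so instead we inspect the proof of Lemma~\ref{lem:order}. There the test elements used are $z_n=th^{1/n}-v$, where $t=\norm{u+v}$ and $h=(u+v)/t$, and the inequality is only needed for these. A cleaner route is to apply the scaled argument inside the target. Given $z=T(y)$, set $x=y+$ something so that $T(c+x)$ can be compared with $z$. Concretely, apply Lemma~\ref{lem:order} with error term $r$: suppose $\norm{T(a)+w}\leq\norm{T(b)+w}+r$ for all $w\in T(c+A_+)$. By \eqref{eq:FM}, this says $\norm{T(a+c+x)}\leq\norm{T(b+c+x)}+r$ for all $x\in A_+$, which is $\norm{T(a+x')}\leq\norm{T(b+x')}+r$ for all $x'\in c+A_+$.

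We then use homogeneity of $f$ from Lemma~\ref{lem:prelim} to remove the shift $c$. For arbitrary $y\in A_+$ and $n\geq1$, take $x'=c+ny$, compute $f(a+c+ny)$, and divide by $n$, using the scaling $f(\cdot)$ together with subadditivity $|f(p+c)-f(p)|\leq f(c)$. This yields the inequality for all $y\in A_+$ up to an error of order $f(c)/n$, but with $a,b$ scaled as well. To avoid this scaling problem, the fallback is to use subadditivity directly, $f(b+c+x)\leq f(b+x)+f(c)$. That gives only a lossy bound, so the intended approach is the first: prove that the inequality $\norm{T(a)+w}\leq\norm{T(b)+w}+r$ for $w$ ranging over $T(c+A_+)$ already forces $T(a)-T(b)\leq r1$. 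The proof repeats that of Lemma~\ref{lem:order}, with the test elements $z_n$ realized as $T(c+x_n)$ modulo an order-preserving adjustment. This is where I expect the real difficulty to lie.
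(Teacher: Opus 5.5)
\textbf{There is a gap: the reverse inequality is not proved.} Your first half is correct and matches the paper's easy direction. Corollary~\ref{cor:distance}, surjectivity and \eqref{eq:FM} give $d(a,b)=\sup_{x\in A_+}|f(a+x)-f(b+x)|$, and restricting to test elements of the form $c+x$ gives $d(a+c,b+c)\le d(a,b)$.

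For the reverse inequality, none of the routes you list is carried out. The one you call the intended approach has no mechanism behind it. The test elements $z_n=th^{1/n}-v$ in Lemma~\ref{lem:order} depend on $T(a)$ and $T(b)$. Realizing them, or controlled majorants of them, inside $T(c+A_+)$ would need something like $T(c+A_+)\supseteq T(c)+B_+$, and that is a consequence of the additivity this lemma is meant to produce. Before additivity, what you actually know about $T(c+A_+)$ is $T(c+x)\ge T(x)$ and $\norm{T(c+x)-T(x)}\le d(c,0)=f(c)$. This only yields your lossy estimate
\[
 d(a',b')\le d(a'+c,b'+c)+f(c)\qquad(a',b'\in A_+).
\]
It follows from
$f(a'+x)\le f(a'+c+x)\le f(b'+c+x)+d(a'+c,b'+c)\le f(b'+x)+f(c)+d(a'+c,b'+c)$
and the symmetric chain.

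\textbf{The missing idea is telescoping.} It is exactly what resolves the ``scaling problem'' you ran into: you need to control the scaled pair, not avoid it.

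\begin{itemize}
\item Apply the lossy estimate to $a'=na$, $b'=nb$.
\item On the left, homogeneity of $f$ and the pullback formula give $d(na,nb)=n\,d(a,b)$.
\item On the right, the chain $ka+(n-k)b+c$, $k=0,\dots,n$, has consecutive pairs $a+c+w_k$ and $b+c+w_k$ with $w_k=ka+(n-1-k)b\in A_+$. The triangle inequality for $d$ together with your easy direction gives $d(na+c,nb+c)\le n\,d(a+c,b+c)$.
\item Hence $n\,d(a,b)\le n\,d(a+c,b+c)+f(c)$. Dividing by $n$ and letting $n\to\infty$ proves the lemma: the scaled pair costs only a factor $n$, while the fixed loss $f(c)$ is divided by $n$.
\end{itemize}

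The paper does the same thing at the level of $f$. With $p=a+x$ and $q=b+x$, it telescopes $f(np+c)-f(nq+c)$ along $kp+(n-k)q+c$. Each step is of the form $f(a+c+z_k)-f(b+c+z_k)$ and is bounded by $d(a+c,b+c)$. Homogeneity then gives $|f(p+c/n)-f(q+c/n)|\le d(a+c,b+c)$, and monotonicity and subadditivity remove the shift $c/n$.
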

\begin{proof}
Corollary~\ref{cor:distance}, surjectivity, and \eqref{eq:FM} give
\begin{equation}\label{eq:pullback}
 d(a,b)=\sup_{x\in A_+}|f(a+x)-f(b+x)|.
\end{equation}
Consequently, $d(a+c,b+c)\leq d(a,b)$.

For the opposite inequality, fix $x\in A_+$, and put
\[
 p=a+x,\qquad q=b+x,\qquad D=d(a+c,b+c).
\]
For $n\geq1$, telescope the difference between $f(np+c)$ and $f(nq+c)$ along the $n+1$ elements
\[
 kp+(n-k)q+c,\qquad k=0,\ldots,n.
\]
The two elements in the $k$th consecutive pair can be written as $a+c+z_k$ and $b+c+z_k$, where
\[
 z_k=x+kp+(n-1-k)q\in A_+,\qquad k=0,\ldots,n-1.
\]
Thus \eqref{eq:pullback} gives
\[
 |f(np+c)-f(nq+c)|\leq nD.
\]
Divide by $n$ and use Lemma~\ref{lem:prelim} to obtain
\[
 |f(p+c/n)-f(q+c/n)|\leq D.
\]
Monotonicity and subadditivity imply
\[
 f(p)\leq f(p+c/n)\leq f(p)+f(c)/n,
\]
and the analogous bounds hold for $q$. Letting $n\to\infty$, we find
\[
 |f(a+x)-f(b+x)|\leq D.
\]
Taking the supremum over $x$ proves the reverse inequality.
\end{proof}

\begin{proof}[Proof of Theorem~\ref{thm:main}]
Assume first that $T$ satisfies \eqref{eq:FM}. Fix $c\in A_+$ and define
\begin{equation}\label{eq:Fc}
 F_c(T(a))=T(a+c)-T(c)\qquad(a\in A_+).
\end{equation}
This defines a map on all of $B_+$ by surjectivity. It is well defined: if $T(a)=T(b)$, then $d(a,b)=0$, so Lemma~\ref{lem:translation} gives $T(a+c)=T(b+c)$. Since $T$ is order preserving, its values in \eqref{eq:Fc} lie in $B_+$.

Moreover, $F_c(0)=0$, and
\[
 \norm{F_c(T(a))-F_c(T(b))}
 =d(a+c,b+c)=d(a,b).
\]
Thus $F_c$ is an isometric embedding of $B_+$ into itself. Finally,
\begin{align*}
 \norm{F_c(T(a))-T(a)}
 &\leq\norm{T(a+c)-T(a)}+\norm{T(c)}\\
 &=d(c,0)+f(c)=2f(c).
\end{align*}
Corollary~\ref{thm:rigidity} applies, and $F_c=\id_{B_+}$. Equation \eqref{eq:Fc} now gives
\[
 T(a+c)=T(a)+T(c)\qquad(a,c\in A_+).
\]
This proves additivity.

Next, any additive map $T:A_+\to B_+$ is order preserving and homogeneous for nonnegative rational scalars. For $a\in A_+$, $t\geq0$, and rational $q\leq t\leq r$, positivity gives
\[
 qT(a)\leq T(ta)\leq rT(a).
\]
Taking $q\uparrow t$ and $r\downarrow t$, and using norm closedness of $B_+$, proves positive homogeneity.

Every element of $A$ is a difference of positive elements. Define
\begin{equation}\label{eq:L}
 L(a-b)=T(a)-T(b)\qquad(a,b\in A_+).
\end{equation}
If $a-b=c-d$, then $a+d=c+b$, so additivity shows that the right-hand side is independent of the representation. Additivity and positive homogeneity of $T$ imply real linearity of $L$. Positivity and uniqueness are immediate. Surjectivity follows by writing an arbitrary $y\in B$ as $y=y_+-y_-$ and lifting each positive part separately.

For completeness, boundedness follows directly from positivity.
If $L$ were unbounded on the positive unit ball, then for each
$n\geq1$ we could choose $b_n\in A_+$ such that
\[
 \norm{b_n}\leq1,\qquad \norm{L(b_n)}\geq n2^n.
\]
Putting $a_n=2^{-n}b_n$, we obtain
\[
 \norm{a_n}\leq2^{-n},\qquad \norm{L(a_n)}\geq n.
\]
The norm-convergent sum $a=\sum_{n=1}^{\infty}a_n$ satisfies $a\geq a_n$, and positivity would give $\norm{L(a)}\geq n$ for every $n$, a contradiction. Thus $L$ is bounded on the positive unit ball. The decomposition $x=x_+-x_-$, with $\norm{x_\pm}\leq\norm{x}$, proves boundedness on $A$.

Finally, $L(A_+)=B_+$ because $L|_{A_+}=T$.
Conversely, a linear extension makes $T$ additive, and additivity
implies \eqref{eq:FM}.
\end{proof}

The reduction from the norm-midpoint identity to the FM equation
in \cite[proof of Corollary~2.2]{HH} also applies to surjections
between positive cones of arbitrary JB-algebras. Combining it
with Theorem~\ref{thm:main} gives the following.

\begin{corollary}\label{cor:midpoint}
For a surjection $T:A_+\to B_+$, the conditions in Theorem~\ref{thm:main} are also equivalent to
\[
 \norm{T((a+b)/2)}
 =\norm{(T(a)+T(b))/2}\qquad(a,b\in A_+).
\]
\end{corollary}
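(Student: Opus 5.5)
Let $T:A_+\to B_+$ be a surjection. If $T$ satisfies the conditions of Theorem~\ref{thm:main}, then $T$ is additive, and additivity gives $T((a+b)/2)=(T(a)+T(b))/2$ directly, after positive homogeneity with $t=1/2$. This proves one direction. For the converse, assume the midpoint identity
\[
 \norm{T((a+b)/2)}=\norm{(T(a)+T(b))/2}\qquad(a,b\in A_+).
\]
The plan is to show that $T$ is an FM-map and then invoke Theorem~\ref{thm:main}.

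\emph{Step 1: $T(0)=0$ and $T$ is order preserving.} Set $f(a)=\norm{T(a)}$. Putting $a=b$ is uninformative. Instead, for $a\le b$ with $c=b-a$, compare $\norm{T(b)+T(x)}=2f((b+x)/2)$ and $\norm{T(a)+T(x)}=2f((a+x)/2)$. Monotonicity of $f$ then yields, via surjectivity and Lemma~\ref{lem:order} with $r=0$, the order inequality $T(a)\le T(b)$. So I first prove monotonicity of $f$.

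To get monotonicity, take $a\le b$ and write $(b+x)/2=((a+x)/2)+(c/2)$. I would like to apply the midpoint identity to this sum, but the identity only controls averages, not sums. I would therefore first establish the homogeneity $f(2a)=2f(a)$ by an alternate route.

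\emph{Step 2: $f(2a)=2f(a)$.} Obtaining $T(0)=0$ is the main obstacle. The midpoint identity at $(a,0)$ gives $f(a/2)=\norm{(T(a)+T(0))/2}$. Choose $a_0$ with $T(a_0)=0$, which is possible by surjectivity. Then $f(a_0)=0$, and the identity at $(a_0,a_0)$ is trivial. The identity at $(2a_0,0)$ gives $0=f(a_0)=\norm{T(2a_0)+T(0)}/2$, so $T(2a_0)=-T(0)$. Since both elements lie in $B_+$, positivity forces $T(0)=0$.

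With $T(0)=0$, the identity at $(a,0)$ reads $f(a/2)=f(a)/2$, so $f(2a)=2f(a)$. The FM identity then follows at once:
\[
 \norm{T(a+b)}=2f((a+b)/2)=\norm{T(a)+T(b)}.
\]
Note that Step 1 is therefore unnecessary.

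Theorem~\ref{thm:main} now gives the equivalence. The only delicate point is the use of positivity of the cone in Step 2 to force $T(0)=0$; the rest is direct substitution.
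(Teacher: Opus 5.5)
Your argument is correct and is essentially the paper's proof: you pick $a_0$ with $T(a_0)=0$, apply the identity to $(2a_0,0)$ and use $B_+\cap(-B_+)=\{0\}$ to get $T(0)=0$, then take $b=0$ to obtain $\norm{T(a/2)}=\norm{T(a)}/2$ and deduce the FM identity before invoking Theorem~\ref{thm:main}. Your Step~1 is incomplete, since monotonicity of $f$ is never established, but as you note it is not needed and should simply be deleted.
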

\begin{proof}
Only the displayed condition needs consideration. By surjectivity, choose $c\in A_+$ with $T(c)=0$. Apply the condition to $0$ and $2c$ to obtain
\[
 0=\norm{T(c)}=\tfrac12\norm{T(0)+T(2c)}.
\]
Both summands are positive, so $T(0)=0$. Taking $b=0$ gives
$\norm{T(a/2)}=\norm{T(a)}/2$. Hence, for arbitrary $a,b\in A_+$,
\[
 \norm{T(a+b)}
 =2\norm{T((a+b)/2)}
 =\norm{T(a)+T(b)}.
\]
Thus the displayed condition implies \eqref{eq:FM}, and
Theorem~\ref{thm:main} applies. The converse follows from linearity.
\end{proof}

\section{Stability of surjective approximate FM-maps}\label{sec:stability}

We now apply quantitative rigidity to an error term in the FM
identity. The use of selections and nearisometries in FM stability
has a precedent in Tabor \cite{Tabor}, whose domain is a group and
whose range is a whole Banach space. Here both are positive cones,
and the approximating linear operator need not be surjective.

\begin{lemma}\label{lem:stablesum}
Let $C$ be a JB-algebra and $p,q\geq0$. Suppose $S:C_+\to C_+$
has bounded displacement from the identity and satisfies
\[
 \big|\norm{S(x)}-\norm x\big|\leq p,\qquad
 \big|\norm{S(x)+S(y)}-\norm{x+y}\big|\leq q
 \quad(x,y\in C_+).
\]
Then $\sup_x\norm{S(x)-x}\leq p+q$. In fact the bound is
$q+\min\{p,q/2\}$. This estimate is independent of the numerical
bound on the displacement.
\end{lemma}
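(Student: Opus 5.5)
The plan is to rerun the face argument from the proof of Theorem~\ref{thm:quantitative}, now testing $S(u)$ against $S(nb)$ through the norm-sum hypothesis rather than through differences. Write
\[
 M=\sup_x\norm{S(x)-x}<\infty .
\]
Fix $u\in C_+$ with $u\ne0$, put $v=S(u)$, and fix a sign $\sigma$ with $s=\norm{(\sigma(v-u))_+}>0$. As in that proof, set $d=\sigma(v-u)$ and $g(t)=\max_{\omega\in Q(C)}\omega(d+tu)$. Then:
\begin{itemize}
\item choose a differentiability point $0<t<s/(2\norm u)$ of $g$;
\item set $m=g(t)$, $\alpha=g'(t)$ and $b=(d+tu)_+/m$;
\item obtain, exactly as before, $\omega(u)=\alpha$ and $\omega(v)=\beta$ for every $\omega\in K_b$, together with
\[
 s\leq|\beta-\alpha|+t\norm u .
\]
\end{itemize}
It therefore suffices to show $|\beta-\alpha|\leq q+\min\{p,q/2\}$ and then let $t\downarrow0$ through differentiability points.

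Next I would put $w_n=S(nb)$ and $r_n=\norm{w_n}$. Bounded displacement gives $\norm{w_n-nb}\leq M$, so $r_n\to\infty$ and $w_n/r_n\to b$; this is the only place the displacement bound enters, and it enters only qualitatively. That is why the final estimate does not depend on $M$.

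There are two scalar bounds on $r_n-n$:
\begin{itemize}
\item the hypothesis on norms gives $|r_n-n|\leq p$ directly;
\item the norm-sum hypothesis with $x=y=nb$ gives $|2r_n-2n|\leq q$, that is, $|r_n-n|\leq q/2$.
\end{itemize}
Hence $|r_n-n|\leq\min\{p,q/2\}$ for all $n$.

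Now apply Lemma~\ref{lem:signedasymptotic} twice with $\tau=1$:
\begin{itemize}
\item with $w_n=nb$ (norm $n$) and $x=u$, giving $\norm{nb+u}-n\to\alpha$;
\item with $w_n=S(nb)$ and $x=v$, giving $\norm{S(nb)+v}-r_n\to\beta$.
\end{itemize}
Note that here the sums, not the differences, appear, which matches the norm-sum hypothesis. The hypothesis with $x=nb$, $y=u$ gives
\[
 \big|\norm{S(nb)+v}-\norm{nb+u}\big|\leq q .
\]
Writing
\[
 \norm{S(nb)+v}-\norm{nb+u}
 =\big(\norm{S(nb)+v}-r_n\big)-\big(\norm{nb+u}-n\big)+(r_n-n),
\]
we obtain $|\beta-\alpha+(r_n-n)|\leq q+o(1)$. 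Since $r_n-n$ is bounded, pass to a subsequence along which it converges to some $\lambda$ with $|\lambda|\leq\min\{p,q/2\}$. Then $|\beta-\alpha|\leq q+|\lambda|\leq q+\min\{p,q/2\}$. Letting $t\downarrow0$ yields $s\leq q+\min\{p,q/2\}$, and treating both signs (the case $s=0$ being trivial) gives the stated bound. The weaker bound $p+q$ follows at once.

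The step I expect to need the most care is verifying that Lemma~\ref{lem:signedasymptotic} really applies to $w_n=S(nb)$. Its hypotheses require only $r_n\to\infty$, $w_n/r_n\to b$, and constancy of $\omega(v)$ on $K_b$, and all three hold here. The one remaining point is to confirm that the face $K_b$ obtained from the differentiability construction is the same face for both $u$ and $v$. This holds because $\alpha$ and $\beta$ were both derived from the single element $d+tu$, which involves $v$ only through $d$.
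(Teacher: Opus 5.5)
Your proposal is correct and is essentially the paper's proof. Both use the face $K_b$ and the values $\alpha,\beta$ from Theorem~\ref{thm:quantitative}, apply Lemma~\ref{lem:signedasymptotic} to sums along $nb$ and $S(nb)$, and compare them via the norm-sum hypothesis at $(nb,u)$ together with the radial bound $|r_n-n|\leq\min\{p,q/2\}$. Your subsequence argument for $r_n-n$ only makes explicit a step the paper leaves implicit. The one omission is the case $u=0$: the face construction excludes it, but it is immediate, since $\norm{S(0)}\leq p$ and $2\norm{S(0)}=\norm{S(0)+S(0)}\leq q$.
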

\begin{proof}
For $u\ne0$, $v=S(u)$, use the direction $b$ and state values
$\alpha,\beta$ from the proof of Theorem~\ref{thm:quantitative}.
Put $r_n=\norm{S(nb)}$; then $|r_n-n|\leq p$ and
$S(nb)/r_n\to b$. Lemma~\ref{lem:signedasymptotic}, now for sums,
gives
\[
 \norm{nb+u}-n\to\alpha,\qquad
 \norm{S(nb)+v}-r_n\to\beta.
\]
Thus $|\beta-\alpha|\leq p+q$, and the same limiting argument for
both signs gives $\norm{S(u)-u}\leq p+q$.
For $u=0$ use $\norm{S(0)}\leq p$.
Taking $x=y$ in the norm-sum assumption bounds the radial error by
$q/2$, proving the refinement.
\end{proof}

In particular, the bound is $3q/2$ when $p=q/2$.
Its sharpness in that case is already witnessed by
\cite[Example~3.9]{SCZ}: their map on $(\ell_\infty^2)_+$ has bounded
displacement, radial error at most $q/2$, norm-sum error at most $q$,
and sends $(q,0)$ to $(q,3q/2)$. This lemma allows maps that are
neither surjective nor zero-fixing, as needed below.

\begin{theorem}[Stability of surjective approximate FM-maps]
\label{thm:FMstability}
Let $A,B$ be JB-algebras, $\varepsilon\geq0$, and let
$T:A_+\to B_+$ be surjective. Assume
\begin{equation}\label{eq:approxFM}
 \big|\norm{T(a+b)}-\norm{T(a)+T(b)}\big|\leq\varepsilon
 \qquad(a,b\in A_+).
\end{equation}
There is a unique real-linear map $L:A\to B$ at finite uniform
distance from $T$ on $A_+$. It is bounded and positive, and
\begin{equation}\label{eq:FMstablebound}
 \sup_{a\in A_+}\norm{T(a)-L(a)}\leq3\varepsilon,
 \qquad \overline{L(A_+)}=B_+.
\end{equation}
In particular $L(A)$ is dense in $B$ and
$\norm{T(a+b)-T(a)-T(b)}\leq9\varepsilon$.
No injectivity, continuity, or normalization at zero is assumed.
The constant $3$ is an upper bound; its optimality is not asserted.
\end{theorem}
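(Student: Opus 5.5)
The plan follows the exact case: recover approximate translation invariance, obtain a rough additivity estimate from Theorem~\ref{thm:quantitative}, pass to a Hyers limit, and then sharpen the constant to $3\varepsilon$ with Lemma~\ref{lem:stablesum}.

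\emph{Scalar Hyers limit.} Put $f(a)=\norm{T(a)}$. Setting $a=b$ in \eqref{eq:approxFM} gives $|f(2a)-2f(a)|\leq\varepsilon$. The standard Hyers argument then produces
\[
 \hat f(a)=\lim_n 2^{-n}f(2^na),\qquad |f-\hat f|\leq\varepsilon .
\]
The triangle inequality makes $f$ subadditive up to $\varepsilon$, so $\hat f$ is subadditive and positively homogeneous. It is monotone by Lemma~\ref{lem:order} with $r=2\varepsilon$, exactly as in Lemma~\ref{lem:prelim}, up to $O(\varepsilon)$ before the limit.

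\emph{Approximate translation invariance.} Corollary~\ref{cor:distance} and surjectivity give
\[
 \norm{T(a)-T(b)}=\sup_x\big|\norm{T(a)+T(x)}-\norm{T(b)+T(x)}\big| .
\]
By \eqref{eq:approxFM}, this is within $2\varepsilon$ of $\rho(a,b)=\sup_x|f(a+x)-f(b+x)|$. Define $\hat\rho$ in the same way with $\hat f$ in place of $f$; then $|\rho-\hat\rho|\leq2\varepsilon$. The telescoping argument of Lemma~\ref{lem:translation} uses only subadditivity, homogeneity and monotonicity, so it applies verbatim to $\hat f$ and gives $\hat\rho(a+c,b+c)=\hat\rho(a,b)$. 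Hence $\norm{T(a+c)-T(b+c)}$ and $\norm{T(a)-T(b)}$ differ by at most a fixed multiple $K_0\varepsilon$.

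\emph{Rough additivity.} Fix $c$ and define a selection $F_c:B_+\to B$ by $F_c(y)=T(a+c)-T(c)$ for a chosen preimage $a$ of $y$. By the previous step $F_c$ satisfies \eqref{eq:quantneariso} with error $K_0\varepsilon$. After a harmless correction at $0$, it satisfies $F_c(0)=0$ up to an $O(\varepsilon)$ adjustment. Its displacement is bounded:
\[
 \norm{F_c(T(a))-T(a)}\leq \norm{T(a+c)-T(a)}+f(c)\leq 2f(c)+O(\varepsilon),
\]
which yields the ray condition \eqref{eq:quantray}. Theorem~\ref{thm:quantitative} then gives
\[
 \norm{T(a+c)-T(a)-T(c)}\leq K\varepsilon
\]
for some absolute constant $K$. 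I expect this step to be the main obstacle. The difficulties are bookkeeping: handling the selection (non-injectivity), the normalization at zero, and checking that the near-isometry constant really transfers. I would first absorb the discrepancy at $0$ into the error constant and only then invoke the theorem.

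\emph{Linear map and sharp constant.} Since $T$ is approximately additive into the Banach space $B$, the Hyers limit
\[
 L(a)=\lim_n 2^{-n}T(2^na)
\]
exists, is additive on $A_+$, and satisfies $\norm{T-L}\leq K\varepsilon$. It is positive because $B_+$ is closed. Additivity and positivity force positive homogeneity, as in the proof of Theorem~\ref{thm:main}. One extends $L$ linearly by \eqref{eq:L} and obtains boundedness from positivity by the same series argument. Uniqueness holds because the difference of two such linear maps is linear and bounded on the cone $A_+$, hence vanishes on $A_+$, and therefore on $A$.

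For density of $L(A_+)$ in $B_+$, let $y\in B_+$ and pick $a_n$ with $T(a_n)=ny$. Then $\norm{L(a_n/n)-y}\leq K\varepsilon/n\to0$. Density of $L(A)$ in $B$ follows since $B_+$ is generating.

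To sharpen $K$ to $3$, define $S:B_+\to B_+$ by $S(T(a))=L(a)$, using the same chosen preimages. Then $\norm{L(a)}=\lim 2^{-n}\norm{T(2^na)}=\hat f(a)$, so $S$ has radial error
\[
 \big|\norm{S(y)}-\norm y\big| = |\hat f(a)-f(a)|\leq\varepsilon .
\]
Its norm-sum error is at most $2\varepsilon$: compare $\norm{L(a+a')}=\hat f(a+a')$ with $f(a+a')$ (within $\varepsilon$), and then $f(a+a')$ with $\norm{T(a)+T(a')}$ (within $\varepsilon$ by \eqref{eq:approxFM}). The displacement of $S$ is bounded by $K\varepsilon$. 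Lemma~\ref{lem:stablesum} with $p=\varepsilon$ and $q=2\varepsilon$ therefore gives
\[
 \norm{L(a)-T(a)}\leq q+\min\{p,q/2\}=3\varepsilon .
\]
Finally, the $9\varepsilon$ bound follows from the triangle inequality and additivity of $L$:
\[
 \norm{T(a+b)-T(a)-T(b)}\leq \norm{T(a+b)-L(a+b)}+\norm{L(a)-T(a)}+\norm{L(b)-T(b)}\leq 9\varepsilon .
\]
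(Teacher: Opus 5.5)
Your proposal is correct and follows essentially the paper's own proof. Both arguments run through the same steps: the Hyers limit $\hat f$ (the paper's $g$); the translation-invariant pseudometric $\hat\rho$ (the paper's $h$), which makes $d$ approximately translation invariant; the zero-normalized selection map $F_c$, fed into Theorem~\ref{thm:quantitative} to get a rough additivity bound (the paper obtains $17\varepsilon$); the dyadic limit $L$; and Lemma~\ref{lem:stablesum} with $p=\varepsilon$, $q=2\varepsilon$ applied to $S=L\circ s$.

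The one point to state explicitly concerns the final step. Since the constants for $F_c$ and $S$ do not depend on the chosen right inverse $s$, you should choose, for each fixed $a$, a right inverse with $s(T(a))=a$, as the paper does. With a single fixed selection, that step bounds only $\norm{L(s(T(a)))-T(a)}$, not $\norm{L(a)-T(a)}$, when $T$ is not injective.
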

\begin{proof}
If $B=\{0\}$ the conclusion is immediate.

\emph{Scalar and distance estimates.}
Put $f(a)=\norm{T(a)}$.
Equation~\eqref{eq:approxFM} and positivity give
\[
 f(0)\leq\varepsilon,\quad |f(2a)-2f(a)|\leq\varepsilon,
 \quad f(a+b)\leq f(a)+f(b)+\varepsilon,
 \quad a\leq b\Rightarrow f(a)\leq f(b)+\varepsilon.
\]
Successive differences of $2^{-n}f(2^na)$ are at most
$2^{-n-1}\varepsilon$, so
\begin{equation}\label{eq:stableg}
 g(a)=\lim_{n\to\infty}2^{-n}f(2^na),\qquad
 |f(a)-g(a)|\leq\varepsilon.
\end{equation}
The function $g$ is monotone and subadditive, with $g(0)=0$ and
$g(2a)=2g(a)$. The integer comparison and rational approximation
in Lemma~\ref{lem:prelim} therefore give $g(ta)=tg(a)$ for $t\geq0$.

Set
\[
\begin{aligned}
 d(a,b)&=\norm{T(a)-T(b)},\\
 D_f(a,b)&=\sup_{x\in A_+}|f(a+x)-f(b+x)|,\\
 h(a,b)&=\sup_{x\in A_+}|g(a+x)-g(b+x)|.
\end{aligned}
\]
Corollary~\ref{cor:distance}, surjectivity and
\eqref{eq:approxFM} imply $|d-D_f|\leq2\varepsilon$.
Thus $D_f$ is finite, and \eqref{eq:stableg} implies finiteness of $h$.
We need the more precise comparison $h\leq D_f\leq h+2\varepsilon$.
The second inequality is immediate. For the first, fix $x\geq0$,
put $p=a+x$, $q=b+x$, and telescope from $nq$ to $np$ along
$kp+(n-k)q$. Each adjacent pair is $a+z_k,b+z_k$, where
$z_k=x+kp+(n-1-k)q\geq0$, $0\leq k<n$.
Hence $|f(np)-f(nq)|\leq nD_f(a,b)$.
Divide by $n$ and use \eqref{eq:stableg} and homogeneity of $g$.
Taking the supremum proves $h\leq D_f$, and hence
\begin{equation}\label{eq:stablehd}
 h(a,b)-2\varepsilon\leq d(a,b)\leq h(a,b)+4\varepsilon.
\end{equation}
The function $h$ is a pseudometric, $h(a,0)=g(a)$, and
$h(a+c,b+c)=h(a,b)$. For the last equality, one inequality follows
by restricting the supremum; the reverse follows by the same
telescoping argument with $c$ added, using
$g(p)\leq g(p+c/n)\leq g(p)+g(c)/n$, as in
Lemma~\ref{lem:translation}. Thus
\begin{equation}\label{eq:stabletranslation}
 |d(a+c,b+c)-d(a,b)|\leq6\varepsilon.
\end{equation}

\emph{Construction of the linear approximation.}
Fix $c\in A_+$ and choose a right inverse $s:B_+\to A_+$ of $T$.
Define $G_c:B_+\to B$ by
\[
 G_c(y)=T(s(y)+c)-T(s(0)+c).
\]
It fixes zero and is a $6\varepsilon$-isometry by
\eqref{eq:stabletranslation}. Moreover \eqref{eq:stablehd} gives
\[
 \norm{T(s(y)+c)-T(s(y))}\leq g(c)+4\varepsilon.
\]
Since $f(s(0))=0$, \eqref{eq:approxFM} also gives
$\norm{T(s(0)+c)}\leq f(c)+\varepsilon$.
Therefore $\norm{G_c(y)-y}\leq g(c)+f(c)+5\varepsilon$, uniformly in
$y$. Theorem~\ref{thm:quantitative} applies, with value space $B$,
and yields $\norm{G_c(y)-y}\leq12\varepsilon$.
Furthermore $g(s(0))\leq\varepsilon$, and \eqref{eq:stablehd} gives
\[
 \norm{T(s(0)+c)-T(c)}\leq5\varepsilon.
\]
For each fixed $a$ choose $s$ so that $s(T(a))=a$. This is also
possible when $T(a)=0$, in which case we take $s(0)=a$.
The estimates do not depend on the selection. Combining them gives
\begin{equation}\label{eq:stable17}
 \norm{T(a+c)-T(a)-T(c)}\leq17\varepsilon
 \qquad(a,c\in A_+).
\end{equation}

The dyadic construction now gives
\[
 L_+(a)=\lim_{n\to\infty}2^{-n}T(2^na)\in B_+,
 \qquad \norm{T(a)-L_+(a)}\leq17\varepsilon.
\]
Scaling \eqref{eq:stable17} proves additivity of $L_+$.
Positivity and rational order bounds give positive homogeneity.
Thus $L(a-b)=L_+(a)-L_+(b)$ defines a positive real-linear map.
The positive-series argument in the proof of
Theorem~\ref{thm:main} proves boundedness without assuming continuity
in advance. Taking norms in the dyadic limit shows
$\norm{L(a)}=g(a)$ for $a\in A_+$.

\emph{The final bound and uniqueness.}
To improve the bound, choose any right inverse $s$ and put
$S(y)=L(s(y))$. Its displacement is at most $17\varepsilon$.
Equations~\eqref{eq:stableg} and \eqref{eq:approxFM} imply
\[
 \big|\norm{S(y)}-\norm y\big|\leq\varepsilon,
 \qquad
 \big|\norm{S(y)+S(z)}-\norm{y+z}\big|\leq2\varepsilon.
\]
Indeed the second estimate follows by inserting $f(s(y)+s(z))$
between $g(s(y)+s(z))$ and $\norm{T(s(y))+T(s(z))}$.
Lemma~\ref{lem:stablesum} with $p=\varepsilon,q=2\varepsilon$ gives
$\norm{S(y)-y}\leq3\varepsilon$.
For each $a$, choose $s(T(a))=a$ once more to obtain
\eqref{eq:FMstablebound}'s first assertion. The map $L$, being
defined by the dyadic limit, is independent of all these choices.

If a real-linear map $M$ is also at finite uniform distance from
$T$ on $A_+$, then $\norm{(L-M)(na)}$ is bounded independently of
$n$. Dividing by $n$ gives $L(a)=M(a)$, and the generating property
of $A_+$ gives $L=M$.
For $b\in B_+$ choose $a_n\in A_+$ with $T(a_n)=nb$.
Then $\norm{L(a_n/n)-b}\leq3\varepsilon/n$, proving density of the
positive image. Density of $L(A)$ follows by taking differences.
The final additive estimate follows from the three approximation
errors for $a+b,a,b$.
\end{proof}

\noindent\emph{Relation to the theorem of Dong, Leung, and Li.}
The existence of a positive linear approximation can alternatively
be obtained from the theorem of Dong, Leung, and Li
\cite[Theorem~5.2]{DLL}, using the JB-algebra
verification in Section~3. Indeed, the scalar and distance estimates
above give $T(a)-T(b)\leq3\varepsilon1$ in $B^{**}$ when $a\leq b$.
For a right inverse $s$ and fixed $c\in A_+$, put
$D_c(y)=T(s(y)+c)-T(c)$. This is a $6\varepsilon$-isometry
with bounded displacement. Since $s(y)+c\geq c$, the approximate
order estimate gives $D_c(y)\geq-3\varepsilon1$, and the distance
estimate above gives $\norm{D_c(0)}\leq5\varepsilon$.
Define
\[
 H_c(0)=0,\qquad H_c(y)=(D_c(y))_+\quad(y\ne0).
\]
The negative part of $D_c(y)$ has norm at most $3\varepsilon$,
so $\sup_{y\in B_+}\norm{H_c(y)-D_c(y)}\leq5\varepsilon$.
Consequently $H_c$ is a zero-fixing
$(6\varepsilon+2\cdot5\varepsilon)=16\varepsilon$-isometry of
$B_+$ with bounded displacement, hence finite approximate surjectivity.
Apply their theorem with $E=F=B$ and $C_E=C_F=1$ to obtain a
cone-preserving surjective linear isometry within $32\varepsilon$
of $H_c$. Bounded displacement forces that isometry to be the
identity. Thus $D_c$ is within
$32\varepsilon+5\varepsilon=37\varepsilon$ of the identity,
which yields a uniform additive bound and then a
positive linear approximation by the same dyadic construction.
Lemma~\ref{lem:stablesum} improves this finite approximation to
$3\varepsilon$, independently of its initial bound. In contrast,
the proof of Theorem~\ref{thm:FMstability} given above applies our
signed-valued Theorem~\ref{thm:quantitative} directly and obtains
the smaller intermediate bound
$17\varepsilon=12\varepsilon+5\varepsilon$.

For $\varepsilon=0$, the theorem recovers the implication from the
FM identity to the positive linear extension in
Theorem~\ref{thm:main}. For positive error the image need not be
closed, even when the original map is a continuous bijection.

\begin{example}[Loss of surjectivity in the linear approximation]
\label{ex:stablec0}
Let $A=B=c_0(\mathbb N,\mathbb R)$ and $\varepsilon>0$. Put
\[
 T(x)_n=x_n/n+\min\{x_n,\varepsilon\}\quad(x\in A_+),
 \qquad L(x)_n=x_n/n\quad(x\in A).
\]
The scalar coordinate maps are strictly increasing. Their inverses
are
\[
 x_n=
 \begin{cases}
 y_n/(1+1/n),&y_n\leq(1+1/n)\varepsilon,\\
 n(y_n-\varepsilon),&y_n>(1+1/n)\varepsilon.
 \end{cases}
\]
For $y\in c_0^+$ the first case holds eventually, so the inverse
sequence belongs to $c_0^+$. Thus $T$ is a bijection; it fixes zero
and is $2$-Lipschitz. For $s,t\geq0$,
\[
 0\leq\min(s,\varepsilon)+\min(t,\varepsilon)
       -\min(s+t,\varepsilon)\leq\varepsilon.
\]
It follows that $\norm{T(x)+T(y)-T(x+y)}\leq\varepsilon$, and hence
\eqref{eq:approxFM} holds. Taking $x=y=\varepsilon e_n$ shows that
the FM error attains $\varepsilon$.
The dyadic limit is $L$, and
$\sup_{x\in A_+}\norm{T(x)-L(x)}=\varepsilon$.
Its range contains all finite sequences and is dense, but
$(1/n)_n$ is not in $L(A)$: its preimage would be the constant
sequence one. Thus $L$ is not surjective.
By uniqueness in Theorem~\ref{thm:FMstability}, no surjective linear
map can replace $L$ at finite uniform distance from $T$.
The example gives the lower bound $1$ for a universal FM approximation
constant; together with the theorem the known bounds here are
$1\leq K_{\mathrm{FM}}\leq3$.
\end{example}

\section{Jordan representations}\label{sec:Jordan}

\begin{proposition}\label{prop:Jordan}
Let $A,B$ be nonzero JB-algebras and let $T:A_+\to B_+$ be a
bijective FM-map. Its extension $L:A\to B$ from
Theorem~\ref{thm:main} is a bounded linear order isomorphism with
bounded positive inverse. Put $h=L^{**}(1_{A^{**}})$. Then
$h\in(B^{**})_{++}$, and there is a unique normal Jordan
isomorphism $\widehat J:A^{**}\to B^{**}$ such that
\begin{equation}\label{eq:Jordan}
 L^{**}=U_{h^{1/2}}\widehat J.
\end{equation}
In particular, $T(a)=U_{h^{1/2}}\widehat J(a)$ for $a\in A_+$.
\end{proposition}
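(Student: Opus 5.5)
By Theorem~\ref{thm:main}, $T$ extends to a bounded positive linear surjection $L:A\to B$ with $L(A_+)=B_+$. The plan has three steps. First, show that $L$ is an order isomorphism. Second, pass to the biduals. Third, invoke the standard normalization of unital linear order isomorphisms of JBW-algebras by quadratic representations, as reviewed in \cite[Sections 2.2--2.3]{LRW}.

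\emph{Step 1: $L$ is an order isomorphism.} Since $T$ is bijective, $L$ is injective on $A_+$. Suppose $L(a-b)=0$ with $a,b\in A_+$. Then $T(a)=T(b)$, so $a=b$, and therefore $L$ is injective on all of $A$. Hence $L$ is a linear bijection. Its inverse maps $B_+=T(A_+)$ onto $A_+$, so $L^{-1}$ is positive. The positive-series argument from the proof of Theorem~\ref{thm:main}, applied to $L^{-1}$, shows that $L^{-1}$ is bounded; the open mapping theorem would also do.

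\emph{Step 2: passage to the biduals.} The biduals $L^{**}$ and $(L^{-1})^{**}=(L^{**})^{-1}$ are weak-star continuous and positive. Positivity uses the weak-star density of $A_+$ in $(A^{**})_+$ together with the fact that the bidual cone is weak-star closed. Thus $L^{**}$ is a normal linear order isomorphism of $A^{**}$ onto $B^{**}$. Put $h=L^{**}(1)$. This element is positive. It is invertible because $1\le\norm{(L^{**})^{-1}}\,L^{**}(1)$ in the order sense: an order isomorphism maps order units to order units, and $1$ is an order unit in $A^{**}$, since every $x$ satisfies $x\le\norm x 1$. Explicitly, for every $y\in(B^{**})_+$ we have
\[
 y=L^{**}\bigl((L^{**})^{-1}y\bigr)\le\norm{(L^{**})^{-1}y}\,h,
\]
and taking $y=1$ gives $1\le ch$ with $c=\norm{(L^{-1})^{**}}$. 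Hence $h\in(B^{**})_{++}$.

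\emph{Step 3: normalization.} Define $\widehat J=U_{h^{-1/2}}L^{**}$. This is a normal unital linear order isomorphism $A^{**}\to B^{**}$, because $U_{h^{-1/2}}$ is a weak-star continuous linear order isomorphism with $U_{h^{-1/2}}h=1$. Here I need the key fact that a unital linear order isomorphism between JB-algebras (with unit) is a Jordan isomorphism. This is the Kadison-type theorem for JB-algebras (Isidro--Rodr\'iguez; see \cite{LRW}, \cite{HS}). The argument runs as follows: such a map is a surjective unital linear isometry for the order-unit norm, and unital surjective isometries are Jordan isomorphisms. Then
\[
 L^{**}=U_{h^{1/2}}\widehat J,
\]
since $U_{h^{1/2}}U_{h^{-1/2}}=\id$. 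Restricting to $A_+$ gives the formula for $T$. For uniqueness, suppose $L^{**}=U_{h^{1/2}}J'$ for some Jordan isomorphism $J'$. Then $J'=U_{h^{-1/2}}L^{**}=\widehat J$.

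I expect the main obstacle to be justifying the Kadison-type step for arbitrary, possibly exceptional, JBW-algebras: I need that a unital order isomorphism preserves squares. If a clean citation is unavailable, I would prove it directly. Such a map sends projections to projections, because projections are the extreme points of the order interval $[0,1]$, and it preserves orthogonality, since $p+q\le1$ for orthogonal projections. It is therefore additive on orthogonal families of projections. By the spectral theorem in JBW-algebras and weak-star continuity, it then preserves squares of elements with finite spectrum, and by norm density of such elements it preserves all squares. Polarization then gives that it preserves the Jordan product.
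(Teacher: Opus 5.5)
Your proposal is correct and follows essentially the same route as the paper: bijectivity of $T$ gives injectivity of $L$ and positivity of $L^{-1}$, invertibility of $h$ comes from $1\le\norm{(L^{-1})^{**}}\,h$, and $\widehat J=U_{h^{-1/2}}L^{**}$ is a unital order isomorphism, hence an order-unit isometry, hence a Jordan isomorphism (the paper cites \cite{LRW} and \cite{WY} for the last step). Your extras are valid but not needed: the positive-series argument for boundedness of $L^{-1}$, and the projection-based fallback proof of the Kadison-type step in JBW-algebras.
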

\begin{proof}
If $L(a-b)=0$, then injectivity of $T$ gives $a=b$.
If $L(a-b)\geq0$, choose $c\in A_+$ with $T(c)=L(a-b)$.
Additivity yields $T(a)=T(b+c)$, whence $a=b+c$.
Thus $L$ is bijective and its inverse is positive; the inverse
is bounded by the open mapping theorem.

Let $P=L^{**}$ and $Q=(L^{-1})^{**}$. The bidual maps are positive,
mutually inverse, and weak-star continuous. Positivity follows
from the weak-star density of the positive cones in their bidual
cones. Since $0\leq Q(1)\leq\norm Q\,1$, application of $P$
gives $1\leq\norm Q\,h$. Hence $h$ is positive invertible.
The map
\[
 \widehat J=U_{h^{-1/2}}P
\]
is a unital linear order isomorphism. A unital linear order
isomorphism between JB-algebras is a Jordan isomorphism;
see \cite[Corollary 2.2 and Proposition 2.3]{LRW} and
\cite{WY}. For clarity, unitality and preservation of order in
both directions first imply isometry by the order-unit norm;
the unital surjective isometry theorem for JB-algebras then
gives preservation of the Jordan product.
Multiplication in a JBW-algebra is separately weak-star
continuous, so $U_{h^{-1/2}}$ and $\widehat J$ are weak-star
continuous. The displayed formula determines $\widehat J$
uniquely.
\end{proof}

The same representation applies to the linear approximation in
Theorem~\ref{thm:FMstability} when a lower distance estimate prevents
collapse of the image.

\begin{corollary}\label{cor:stableJordan}
Let $A,B$ be nonzero. Under the hypotheses of
Theorem~\ref{thm:FMstability}, suppose in
addition that some $m>0$ and $\kappa\geq0$ satisfy
\[
 \norm{T(a)-T(b)}\geq m\norm{a-b}-\kappa
 \qquad(a,b\in A_+).
\]
Then $L$ is a bounded linear order isomorphism, and
\[
 L^{**}=U_{h^{1/2}}\widehat J,\qquad h=L^{**}(1),
\]
where $h$ is positive invertible in $B^{**}$ and
$\widehat J:A^{**}\to B^{**}$ is a unital Jordan isomorphism.
\end{corollary}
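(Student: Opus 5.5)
The plan is to transfer the lower distance estimate from $T$ to $L$, deduce that $L$ is a bounded linear order isomorphism, and then repeat the bidual argument of Proposition~\ref{prop:Jordan} word for word.

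\emph{Step 1: a lower bound for $L$ on the positive cone.} Since $\norm{T(a)-L(a)}\leq3\varepsilon$ on $A_+$, we have
\[
 \norm{L(a)-L(b)}\geq m\norm{a-b}-\kappa-6\varepsilon
 \qquad(a,b\in A_+).
\]
Replace $a,b$ by $na,nb$ and divide by $n$. This removes the additive constant and gives $\norm{L(a)-L(b)}\geq m\norm{a-b}$ on $A_+$.

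\emph{Step 2: injectivity and closed range.} Every $x\in A$ is $x_+-x_-$ with $\norm{x_\pm}\leq\norm x$. Step~1 then gives $\norm{L(x)}\geq m\norm x$ on all of $A$. So $L$ is injective and bounded below, and its range is closed. Theorem~\ref{thm:FMstability} says $L(A)$ is dense in $B$, hence $L$ is a bijection. Similarly, $L(A_+)$ is closed: a Cauchy image sequence has a Cauchy preimage, and $A_+$ is closed. Since $L(A_+)$ is dense in $B_+$, we get $L(A_+)=B_+$.

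\emph{Step 3: positivity of $L^{-1}$.} Suppose $L(x)\geq0$. Then $L(x)=L(c)$ for some $c\in A_+$, and injectivity gives $x=c\geq0$. So $L^{-1}$ is positive, and it is bounded with norm at most $1/m$. Thus $L$ is a bounded linear order isomorphism.

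\emph{Step 4: the Jordan representation.} This part is identical to the proof of Proposition~\ref{prop:Jordan}, whose second half used only these facts about $L$ and never used the FM identity.
\begin{itemize}
\item Set $P=L^{**}$ and $Q=(L^{-1})^{**}$. These are positive, mutually inverse and weak-star continuous.
\item $h=P(1)$ satisfies $1\leq\norm Q\,h$, so $h$ is positive invertible.
\item $\widehat J=U_{h^{-1/2}}P$ is a unital linear order isomorphism between JBW-algebras. By the cited results \cite{LRW,WY} it is a Jordan isomorphism.
\item $L^{**}=U_{h^{1/2}}\widehat J$ follows because $U_{h^{1/2}}$ and $U_{h^{-1/2}}$ are mutually inverse.
\end{itemize}

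\emph{Main obstacle.} The only non-routine point is Step~2: density of $L(A)$ alone does not give surjectivity, as Example~\ref{ex:stablec0} shows. The lower estimate is needed to make the range closed. Once the additive constants are scaled away in Step~1, the rest is mechanical.
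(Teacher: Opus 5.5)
Your proposal is correct and follows the paper's proof essentially step for step. You scale the lower estimate to obtain $\norm{L(x)}\geq m\norm x$, use closedness plus density to get $L(A)=B$ and $L(A_+)=B_+$, and then reuse the bidual normalization of Proposition~\ref{prop:Jordan}; your Step~3 only makes explicit the positivity of $L^{-1}$, which the paper leaves implicit.
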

\begin{proof}
Apply the lower estimate to $na,nb$, use
\eqref{eq:FMstablebound}, and divide by $n$.
This gives
\[
 \norm{L(a)-L(b)}\geq m\norm{a-b}\quad(a,b\in A_+),
\]
and hence $\norm{L(x)}\geq m\norm x$ for $x\in A$.
Both $L(A)$ and $L(A_+)$ are closed: a convergent image sequence
has a Cauchy preimage, and the positive cone is closed.
Density therefore gives $L(A)=B$ and $L(A_+)=B_+$.
The bidual normalization for a bounded linear order isomorphism
in Proposition~\ref{prop:Jordan} gives the stated formula.
\end{proof}

\begin{remark}\label{rem:restriction}
The extension $L$ maps $A$ onto $B$, but its normalization has
the precise image
\[
 \widehat J(A)=U_{h^{-1/2}}(B).
\]
This image must not be identified with $B$ without further
justification. If $A$ and $B$ are unital, however,
$h=L(1_A)=T(1_A)\in B_{++}$ and $U_{h^{\pm1/2}}$ map $B$
onto itself. Thus $J=\widehat J|_A:A\to B$ is a Jordan
isomorphism and
\[
 T(a)=U_{T(1_A)^{1/2}}J(a)\qquad(a\in A_+).
\]
\end{remark}

\begin{example}[Surjectivity does not imply injectivity]
Let $C=C_0((0,1),\mathbb R)$, $A=C\oplus C$, and $B=C$, with
pointwise Jordan products and the usual supremum norms.
The map $T(f,g)=f+g$ on $A_+$ is surjective, additive, and
positively homogeneous, and hence is an FM-map. It is not
injective. Its linear extension is not a Jordan homomorphism:
for nonzero $f\in C_+$, the elements $x=(f,0)$ and $y=(0,f)$
satisfy $x\circ y=0$, whereas $L(x)\circ L(y)=f^2\ne0$.
This example also satisfies $T^{-1}(0)=\{0\}$ on $A_+$.
\end{example}

\begin{example}[An isometric FM-map which is not additive]
\label{ex:nonsurjective}
Inspired by the dominated-coordinate construction in
\cite[Example 1.4]{DLMW}, take the nonunital JB-algebras
$A=B=c_0(\mathbb N,\mathbb R)$ and put
\[
 h(t)=\frac{t^2}{1+t}\quad(t\geq0),\qquad
 T(a)=(a_1,h(a_1),a_2,h(a_2),\ldots)\quad(a\in A_+).
\]
Since $0\leq h(t)\leq t$, this defines a map into $B_+$ and
$\norm{T(a)}=\norm a$. For $a,b\in A_+$,
\begin{align*}
 \norm{T(a)+T(b)}
 &=\sup_n\max\{a_n+b_n,h(a_n)+h(b_n)\}\\
 &=\sup_n(a_n+b_n)=\norm{a+b}=\norm{T(a+b)}.
\end{align*}
Nevertheless,
\[
 T(e_1)=(1,1/2,0,\ldots),\qquad
 T(2e_1)=(2,4/3,0,\ldots)\ne2T(e_1),
\]
so $T$ is neither additive nor positively homogeneous.
It is injective because the odd coordinates recover the input,
and it is not surjective since $e_2\notin T(A_+)$.
Moreover, $h'(t)=1-(1+t)^{-2}\in[0,1)$, so
\[
 \norm{T(a)-T(b)}
 =\sup_n\max\{|a_n-b_n|,|h(a_n)-h(b_n)|\}
 =\norm{a-b}.
\]
Thus even an isometric embedding satisfying the FM equation
need not be additive. Surjectivity in Theorem~\ref{thm:main}
cannot be replaced by injectivity or continuity.
\end{example}

\section{Maps preserving the norm of positive sums}

Hatori and Oi \cite[Theorem 3.3]{HO} proved the
$C^*$-algebra norm-sum characterization using the results of
Section~3 of their preprint. The next corollary extends that
characterization to JB-algebras. Its proof combines
Theorem~\ref{thm:main} with the JB-algebra extensions of their
order and distance formulas.

\begin{corollary}\label{cor:normsum}
Let $A,B$ be JB-algebras. A surjection $\phi:A_+\to B_+$ satisfies
\begin{equation}\label{eq:normsum}
 \norm{\phi(a)+\phi(b)}=\norm{a+b}\qquad(a,b\in A_+)
\end{equation}
if and only if it is the restriction of a Jordan isomorphism
$J:A\to B$. Such an extension is unique.
\end{corollary}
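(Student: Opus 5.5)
The plan is to reduce Corollary~\ref{cor:normsum} to Theorem~\ref{thm:main} and then to normalize with Proposition~\ref{prop:Jordan}.

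\emph{Step 1: $\phi$ is an FM-map and preserves norms.} Setting $a=b$ in \eqref{eq:normsum} gives $2\norm{\phi(a)}=2\norm a$, so $\norm{\phi(a)}=\norm a$ for all $a\in A_+$. Then
\[
\norm{\phi(a+b)}=\norm{a+b}=\norm{\phi(a)+\phi(b)},
\]
so $\phi$ satisfies \eqref{eq:FM}. Theorem~\ref{thm:main} yields a unique bounded positive linear surjection $L:A\to B$ extending $\phi$, with $L(A_+)=B_+$. Linearity turns \eqref{eq:normsum} into $\norm{L(a)}=\norm a$ for $a\in A_+$.

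\emph{Step 2: $L$ is an isometric order isomorphism.} I will use Corollary~\ref{cor:distance} on both sides. For $u,v\in A_+$, surjectivity of $\phi$ lets the test elements $z\in B_+$ range over $\phi(x)$ with $x\in A_+$. This gives
\[
\norm{L(u)-L(v)}=\sup_{x}\bigl|\norm{u+x}-\norm{v+x}\bigr|=\norm{u-v}.
\]
Hence $\phi$ is injective, and therefore bijective.

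Next I show that $L^{-1}$ is positive. Suppose $L(u-v)\ge0$. Pick $c\in A_+$ with $L(c)=L(u-v)$; additivity gives $\phi(u)=\phi(v+c)$, and injectivity gives $u=v+c\ge v$. So $L$ is an order isomorphism with positive inverse, as in Proposition~\ref{prop:Jordan}.

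To see that $L$ is a full isometry, I use Lemma~\ref{lem:order} in the bidual. The relation $-r1\le x\le r1$ transports to $-rL^{**}(1)\le L(x)\le rL^{**}(1)$, which is not by itself enough. So the real task is to prove unitality $L^{**}(1)=1$.

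\emph{Step 3: unitality, which I expect to be the main obstacle.} Proposition~\ref{prop:Jordan} gives $L^{**}=U_{h^{1/2}}\widehat J$ with $h=L^{**}(1)\in(B^{**})_{++}$. I want to show $h=1$ using norm preservation on $A_+$. Norm preservation extends to $(A^{**})_+$ by weak-star density together with weak-star lower semicontinuity. The matching upper bound should come from monotonicity along increasing approximations, using the positive normal functionals.

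Given that, for a projection $q\in B^{**}$ put $p=\widehat J^{-1}(q)$, also a projection. Then
\[
\norm{U_{h^{1/2}}q}=\norm{L^{**}(p)}=\norm p=1\quad\text{whenever }q\ne0.
\]
Applying this to the spectral projections $q=\chi_{(1+\delta,\infty)}(h)$ gives $\norm{U_{h^{1/2}}q}\ge1+\delta$ if $q\ne0$, which forces $h\le1$.

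For the reverse inequality, take $q=\chi_{[0,1-\delta]}(h)$. In the associative subalgebra generated by $h$ and $q$, $U_{h^{1/2}}q=hq\le(1-\delta)q$, so its norm is below one, forcing $q=0$. Hence $h=1$, the map $\widehat J=L^{**}$ is a Jordan isomorphism, and $J=L$ restricted to $A$ is a Jordan isomorphism onto $B$.

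The delicate point is the bidual norm-preservation claim. An alternative is to argue directly in $A$ with approximate units $e_\lambda$: show that $\norm{L(e_\lambda)}=1$ forces $L(e_\lambda)\to1$ weak-star in $B^{**}$. This is where I expect to spend the effort.

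\emph{Converse and uniqueness.} A Jordan isomorphism between JB-algebras is an isometric order isomorphism, since it preserves squares and the spectrum. Its restriction therefore maps $A_+$ onto $B_+$ and satisfies \eqref{eq:normsum} by linearity. Uniqueness follows because $A_+$ generates $A$.
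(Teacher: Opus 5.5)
There is a gap in Step~3, at the point you yourself flag. The overall architecture matches the paper's: reduce to Theorem~\ref{thm:main} by setting $a=b$, use Corollary~\ref{cor:distance} with surjectivity to get $\norm{L(u)-L(v)}=\norm{u-v}$ on $A_+$, normalize with Proposition~\ref{prop:Jordan}, and show $h=1$.

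The claim that $\norm{L^{**}(x)}=\norm x$ on $(A^{**})_+$ is never proved, and the routes you sketch do not deliver it as stated. Take a net $x_\lambda\to x$ weak-star from $A_+$. Lower semicontinuity gives at best $\norm{L^{**}(x)}\leq\liminf\norm{x_\lambda}$. This compares with $\norm x$ only if the net is norm-bounded by $\norm x$, which is a Kaplansky-type density statement you do not invoke. Even then it gives only one of the two inequalities. The other cannot come from ``monotonicity along increasing approximations'': a general element of $(A^{**})_+$, in particular a projection such as $\widehat J^{-1}(q)$, need not be the supremum of an increasing net from $A_+$. The approximate-unit alternative is also left open.

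The missing idea is already in your Step~2. Your remark that the isometry question cannot be settled without unitality is mistaken. Since $L$ is linear and every $x\in A$ can be written as $u-v$ with $u,v\in A_+$, you get $\norm{Lx}=\norm{L(u)-L(v)}=\norm{u-v}=\norm x$. So $L$ is a surjective linear isometry of $A$ onto $B$, and this is exactly how the paper proceeds.

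It follows that $L^{**}$ is a surjective linear isometry too. Norm preservation then holds on all of $A^{**}$, and your spectral-projection argument for $h=1$ goes through as written. The paper finishes more directly: $L^{**}$ is an isometric order isomorphism, so it maps the positive closed unit ball of $A^{**}$ onto that of $B^{**}$. It therefore sends the greatest element $1_{A^{**}}$ to $1_{B^{**}}$, giving $h=1$ without spectral projections. Your converse and uniqueness arguments are fine.
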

\begin{proof}
Putting $a=b$ gives $\norm{\phi(a)}=\norm a$. Thus $\phi$ is
an FM-map, and Theorem~\ref{thm:main} supplies a bounded linear
surjection $L:A\to B$ extending it. By
Corollary~\ref{cor:distance} and surjectivity,
\begin{align*}
 \norm{\phi(a)-\phi(b)}
 &=\sup_{z\in B_+}|\norm{\phi(a)+z}-\norm{\phi(b)+z}|\\
 &=\sup_{c\in A_+}|\norm{a+c}-\norm{b+c}|=\norm{a-b}.
\end{align*}
Consequently $\phi$ is injective. For every $x\in A$, choose
$a,b\in A_+$ with $x=a-b$. Then
\[
 \norm{Lx}=\norm{\phi(a)-\phi(b)}=\norm{a-b}=\norm x.
\]
Thus $L$ is a surjective linear isometry.
Proposition~\ref{prop:Jordan} shows that $L$ is also an
order isomorphism.

The map $L^{**}$ is a surjective linear isometric order
isomorphism and maps the positive closed unit ball onto the
positive closed unit ball. The greatest elements of these balls
are $1_{A^{**}}$ and $1_{B^{**}}$, respectively. Hence
$L^{**}(1)=1$, so the weight in \eqref{eq:Jordan} is $h=1$.
It follows that $L^{**}$ is a Jordan isomorphism.
Its restriction $J=L$ maps $A$ onto $B$ by construction and
preserves the Jordan product because the canonical bidual
embeddings do. Uniqueness follows from $A=A_+-A_+$.
Conversely, a Jordan isomorphism between JB-algebras is
isometric and additive, and hence satisfies \eqref{eq:normsum}.
\end{proof}

The following $C^*$-algebra specialization is the earlier
theorem of Hatori and Oi \cite[Theorem 3.3]{HO}; it is included
to make the connection with the JB-algebra result explicit.

\begin{corollary}[Hatori--Oi: norm-sum preservers on $C^*$-algebras]
\label{cor:Cstarnormsum}
Let $\mathcal A,\mathcal B$ be arbitrary $C^*$-algebras.
A surjection $\phi:\mathcal A_+\to\mathcal B_+$ satisfies
\[
 \norm{\phi(a)+\phi(b)}=\norm{a+b}\qquad(a,b\in\mathcal A_+)
\]
if and only if it is the restriction of a Jordan $*$-isomorphism
$J:\mathcal A\to\mathcal B$. Such an extension is unique.
\end{corollary}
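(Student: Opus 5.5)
The plan is to reduce to Corollary~\ref{cor:normsum} through the self-adjoint parts, as in the proof of Corollary~\ref{cor:CstarFM}. The sets $\mathcal A_{\mathrm{sa}}$ and $\mathcal B_{\mathrm{sa}}$ are JB-algebras under $x\circ y=(xy+yx)/2$. Their positive cones are $\mathcal A_+$ and $\mathcal B_+$, and their norms are inherited.

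For the forward direction, the surjection $\phi$ satisfies \eqref{eq:normsum} for these JB-algebras. Corollary~\ref{cor:normsum} then gives a unique Jordan isomorphism $J_0:\mathcal A_{\mathrm{sa}}\to\mathcal B_{\mathrm{sa}}$ extending $\phi$. Define
\[
 J(x+iy)=J_0(x)+iJ_0(y)\qquad(x,y\in\mathcal A_{\mathrm{sa}}).
\]
Uniqueness of the decomposition into self-adjoint real and imaginary parts makes $J$ well defined and complex-linear. It is bijective because $J_0$ is, and it is $*$-preserving by construction.

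The one step needing care is that $J$ preserves the complex Jordan product $z\circ w=(zw+wz)/2$. By complex bilinearity of both sides, it suffices to check $J(x\circ y)=J(x)\circ J(y)$ for self-adjoint $x,y$. For such $x,y$ the element $x\circ y$ is self-adjoint, so this identity is exactly the statement that $J_0$ is a Jordan homomorphism. Equivalently one checks $J(z^2)=J(z)^2$ for $z=x+iy$: expanding gives $x^2-y^2+i(xy+yx)$ on both sides, again by the Jordan property of $J_0$. Hence $J$ is a Jordan $*$-isomorphism with $J|_{\mathcal A_+}=\phi$.

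Uniqueness follows because $\mathcal A_+$ spans $\mathcal A$ over $\mathbb C$: any complex-linear extension is determined on $\mathcal A_{\mathrm{sa}}=\mathcal A_+-\mathcal A_+$, and hence everywhere.

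For the converse, a Jordan $*$-isomorphism $J$ restricts to a Jordan isomorphism of the self-adjoint parts. In particular $J(\mathcal A_+)=\mathcal B_+$, since positive elements are exactly the squares of self-adjoint elements. The converse half of Corollary~\ref{cor:normsum} then gives $\norm{J(a)+J(b)}=\norm{J(a+b)}=\norm{a+b}$. The only mild obstacle in the whole argument is the bookkeeping that complexification preserves the Jordan product and the involution, which is the bilinearity check above.
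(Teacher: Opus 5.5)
Your proposal is correct and follows the paper's proof essentially step for step. You apply Corollary~\ref{cor:normsum} to the self-adjoint parts, complexify $J_0$, check the Jordan product by complex bilinearity, and get uniqueness from the complex span of $\mathcal A_+$; your converse merely adds the explicit check $J(\mathcal A_+)=\mathcal B_+$ before using isometry and additivity.
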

\begin{proof}
Apply Corollary~\ref{cor:normsum} to the real JB-algebras
$\mathcal A_{\mathrm{sa}}$ and $\mathcal B_{\mathrm{sa}}$.
The resulting Jordan isomorphism
$J_0:\mathcal A_{\mathrm{sa}}\to\mathcal B_{\mathrm{sa}}$ has the
complex-linear extension
\[
 J(x+iy)=J_0(x)+iJ_0(y).
\]
It is bijective and preserves the involution. Since $J_0$
preserves the Jordan product on self-adjoint elements, complex
bilinearity shows that $J$ preserves the Jordan product on all
of $\mathcal A$. Thus $J$ is a Jordan $*$-isomorphism.
Uniqueness follows from the complex linear span of $\mathcal A_+$.
Conversely, a Jordan $*$-isomorphism is isometric and additive,
so its restriction satisfies the displayed identity.
\end{proof}

Corollary~\ref{cor:Cstarnormsum} contains the theorem of Dong,
Li, Moln\'ar, and Wong \cite[Theorem 2.5]{DLMW} as a special
case. Their theorem assumes that at least one of the two
$C^*$-algebras is unital. The removal of that assumption,
including the case in which both algebras are nonunital,
was established by Hatori and Oi \cite[Theorem 3.3]{HO}.
Corollary~\ref{cor:normsum} extends this result to JB-algebras.

\section{Arithmetic and harmonic mean identities on open cones}\label{sec:means}

Throughout this section $A,B$ are nonzero unital JB-algebras,
and $A_{++},B_{++}$ denote their positive invertible cones.
We first isolate the metric argument that will be used for
both arithmetic and harmonic means.

\begin{lemma}\label{lem:opencriterion}
Let $R:A_{++}\to B_{++}$ be surjective and order preserving,
and assume that $\norm{R(x)}\to0$ as $x\to0$ in $A_{++}$.
Suppose there is a continuous positively homogeneous function
$f:A_{++}\to\mathbb R$ such that
\begin{equation}\label{eq:openpullback}
 d(a,b):=\norm{R(a)-R(b)}
 =\sup_{x\in A_{++}}|f(a+x)-f(b+x)|.
\end{equation}
Then $R$ is additive and norm continuous. Injectivity is not
required.
\end{lemma}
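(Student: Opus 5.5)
The plan follows the proof of Theorem~\ref{thm:main}: derive translation invariance of $d$ from \eqref{eq:openpullback}, build an isometric self-embedding of $B_{++}$ for each translation, extend it to $B_+$, and apply Corollary~\ref{thm:rigidity}. Here $d$ is defined on $A_{++}$, the only information on the supremum side is continuity and homogeneity of $f$, and $R$ is not defined at $0$; the hypothesis $\norm{R(x)}\to0$ will replace the normalization $T(0)=0$.

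\emph{Step 1: translation invariance.} I will show $d(a+c,b+c)=d(a,b)$ for $a,b\in A_{++}$ and $c\in A_+$. Restricting the supremum in \eqref{eq:openpullback} to $x\in c+A_{++}$ gives $d(a+c,b+c)\leq d(a,b)$. For the reverse inequality, fix $x\in A_{++}$ and put $p=a+x$, $q=b+x$, $D=d(a+c,b+c)$. I telescope $f(np+c)-f(nq+c)$ along $kp+(n-k)q+c$ exactly as in Lemma~\ref{lem:translation}. The increments are $z_k=x+kp+(n-1-k)q\in A_{++}$, which are admissible test elements, so $|f(np+c)-f(nq+c)|\leq nD$. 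Homogeneity gives $|f(p+c/n)-f(q+c/n)|\leq D$. Continuity of $f$, rather than monotonicity or subadditivity, lets $n\to\infty$. Taking the supremum over $x$ finishes the step.

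\emph{Step 2: the maps $F_c$.} Fix $c\in A_{++}$ and define $F_c(R(a))=R(a+c)-R(c)$ on $B_{++}$. By Step~1 this is well defined, since $d(a,b)=0$ forces $R(a+c)=R(b+c)$. It is isometric, and since $R$ is order preserving it takes values in $B_+$. Being isometric, it extends uniquely to an isometric embedding $F_c:B_+\to B_+$ on the norm closure $B_+$ of $B_{++}$.

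To get $F_c(0)=0$, take $x\to0$ in $A_{++}$, so that $R(x)\to0$ by hypothesis. Write $c=c'+\delta1$ with $c'\in A_{++}$ and $\delta>0$ small. Then Step~1 gives
\[
 \norm{R(x+c)-R(c)}=d(x+\delta1,\delta1)\leq\norm{R(x+\delta1)}+\norm{R(\delta1)},
\]
and this tends to $0$ as $x\to0$ and $\delta\to0$.

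For bounded displacement, take $a\in A_{++}$ and choose $x\in A_{++}$ with $\norm x\leq1$ and $a-x\in A_{++}$, for instance $x=\eta1$ with $\eta$ small. Step~1 then gives $d(a+c,a)=d(x+c,x)$. Monotonicity bounds this by $\norm{R(c+1)}+\norm{R(1)}$, so $\norm{F_c(y)-y}\leq\norm{R(c+1)}+\norm{R(1)}+\norm{R(c)}$ on $B_{++}$, and hence on $B_+$.

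Corollary~\ref{thm:rigidity} now gives $F_c=\id$, so $R(a+c)=R(a)+R(c)$ for all $a,c\in A_{++}$.

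\emph{Step 3: continuity.} Let $a,b\in A_{++}$ with $\norm{a-b}\leq\delta$. Then $b\leq a+\delta1$ and $a\leq b+\delta1$, so additivity and order preservation give
\[
 -R(\delta1)\leq R(b)-R(a)\leq R(\delta1).
\]
Hence $\norm{R(b)-R(a)}\leq\norm{R(\delta1)}$, which tends to $0$ as $\delta\to0$ by hypothesis.

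The main obstacle is Step~2: $R$ is undefined at $0$, so both the normalization $F_c(0)=0$ and the bounded displacement must be obtained from the limit hypothesis together with translation invariance. I have handled this by splitting off $\delta1$ from $c$ and using monotonicity, and I expect this part to need the most care when written out.
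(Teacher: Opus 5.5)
Your proof is correct. It shares the paper's skeleton: telescoping translation invariance, the maps $F_c$, extension to $B_+$ by density, and Corollary~\ref{thm:rigidity}. It orders the auxiliary steps differently, however.

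The paper proves continuity of $R$ \emph{before} additivity. Translating by $a-2t1_A$ gives $\norm{R(a+t1_A)-R(a-t1_A)}=d(3t1_A,t1_A)\to0$. Order preservation then sandwiches $R(x)$ between $R(a\mp t1_A)$ whenever $\norm{x-a}<t$. Continuity yields the exact increment formula $d(a+c,a)=\lim_{t\downarrow0}d(c,t1_A)=\norm{R(c)}$. This gives displacement at most $2\norm{R(c)}$, and $\overline F_c(0)=0$ follows from continuity at $c$.

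You instead get $\overline F_c(0)=0$ by splitting $c=c'+\delta1_A$. Since the left side there does not depend on $\delta$, you may take $\delta$ comparable to $\norm{x}$, which makes your limit statement rigorous. You get the cruder but sufficient displacement bound $\norm{R(c+1_A)}+\norm{R(1_A)}+\norm{R(c)}$ from monotonicity. Only after additivity do you derive continuity, which then comes with the uniform modulus $\norm{R(\delta1_A)}$.

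What each route buys: the paper's order gives continuity and the increment formula independently of additivity. Yours avoids the pre-additivity sandwich argument and uses only that Corollary~\ref{thm:rigidity} needs finite, not sharp, displacement. Your extension of Step~1 to $c\in A_+$ is valid but not needed, since every translation you actually use is by an element of $A_{++}$.
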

\begin{proof}
We first establish
\begin{equation}\label{eq:opentranslation}
 d(a+c,b+c)=d(a,b)\qquad(a,b,c\in A_{++}).
\end{equation}
The inequality $\leq$ follows from \eqref{eq:openpullback}.
For the reverse inequality, fix $x\in A_{++}$, put
$p=a+x$, $q=b+x$, and $D=d(a+c,b+c)$, and telescope along
$kp+(n-k)q+c$, $k=0,\ldots,n$.
Each consecutive pair is of the form $a+c+z_k$, $b+c+z_k$,
where $z_k=x+kp+(n-1-k)q\in A_{++}$. Consequently
$|f(np+c)-f(nq+c)|\leq nD$.
After division by $n$, homogeneity and continuity yield
$|f(a+x)-f(b+x)|\leq D$ as $n\to\infty$.
Taking the supremum proves \eqref{eq:opentranslation}.

For fixed $a\in A_{++}$ and sufficiently small $t>0$, we have
$a-2t1_A\in A_{++}$. Translation invariance gives
\[
 \norm{R(a+t1_A)-R(a-t1_A)}=d(3t1_A,t1_A)\longrightarrow0.
\]
If $\norm{x-a}<t$ and $x\in A_{++}$, order preservation
sandwiches $R(x)$ and $R(a)$ between $R(a-t1_A)$ and
$R(a+t1_A)$. The order-unit norm therefore gives
\[
 \norm{R(x)-R(a)}\leq\norm{R(a+t1_A)-R(a-t1_A)}.
\]
This proves continuity. For $a,c\in A_{++}$ it follows that
\begin{equation}\label{eq:openincrement}
 d(a+c,a)=\lim_{t\downarrow0}d(a+c,a+t1_A)
 =\lim_{t\downarrow0}d(c,t1_A)=\norm{R(c)}.
\end{equation}

Fix $c\in A_{++}$ and set
\[
 F_c(R(a))=R(a+c)-R(c)\qquad(a\in A_{++}).
\]
This defines a map $F_c:B_{++}\to B_+$: order preservation
ensures positivity, and \eqref{eq:opentranslation} ensures
well-definedness even if $R$ is not injective. The same identity
shows that $F_c$ is an isometric embedding. It extends uniquely
by density to an isometric embedding
$\overline F_c:B_+\to B_+$. Taking $a=t1_A$, the limit at zero
and continuity at $c$ imply $\overline F_c(0)=0$.
Equation \eqref{eq:openincrement} gives
\[
 \norm{F_c(R(a))-R(a)}\leq2\norm{R(c)}.
\]
The bound extends to $B_+$ by density. Applying
Corollary~\ref{thm:rigidity} gives $\overline F_c=\id_{B_+}$,
and hence $R(a+c)=R(a)+R(c)$.
\end{proof}

\begin{lemma}\label{lem:opencontinuity}
Suppose $T:A_{++}\to B_{++}$ is surjective and satisfies
\begin{equation}\label{eq:openmidpoint}
 \norm{T((a+b)/2)}=\norm{(T(a)+T(b))/2}
 \qquad(a,b\in A_{++}).
\end{equation}
Then $f(a)=\norm{T(a)}$ is continuous and
\begin{equation}\label{eq:origin}
 \lim_{\substack{x\to0\\x\in A_{++}}}\norm{T(x)}=0.
\end{equation}
Moreover, $T$ satisfies the FM equation on $A_{++}$.
\end{lemma}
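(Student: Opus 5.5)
The plan is to work entirely with the scalar function $f(a)=\norm{T(a)}$ on the open convex cone $A_{++}$. First I will extract two elementary inequalities from \eqref{eq:openmidpoint}. The triangle inequality gives
\[
 f\big((a+b)/2\big)\leq\tfrac12\big(f(a)+f(b)\big),
\]
so $f$ is midpoint convex. Since $T(b)\in B_{++}$ and the norm is monotone on $B_+$, we also get $f((a+b)/2)\geq\tfrac12f(a)$, that is, $f(a)\leq2f((a+b)/2)$.

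For continuity, fix $a_0\in A_{++}$ and choose $t>0$ with $a_0-2t1_A\in A_{++}$. For $\norm{x-a_0}<t$ the element $y=2a_0-x$ lies in $A_{++}$ and $(x+y)/2=a_0$. The second inequality then gives $f(x)\leq2f(a_0)$, so $f$ is bounded above on a norm neighbourhood of every point. A midpoint convex function that is locally bounded above on an open convex subset of a normed space is convex and continuous; this is the Bernstein--Doetsch theorem, applied along lines and using local boundedness in the norm topology. This settles the first assertion.

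The main obstacle is \eqref{eq:origin}, because nothing so far links $\norm{T(x)}$ to $\norm x$. I would proceed as follows. By surjectivity choose $c_n\in A_{++}$ with $T(c_n)=n^{-1}1_B$, so $f(c_n)=1/n$. I would first show that $T$ is order preserving, by adapting Lemma~\ref{lem:prelim}. Surjectivity turns the tests in Lemma~\ref{lem:order} (restricted to $B_{++}$ by Remark~\ref{rem:opentests}) into values of $f$ via $\norm{T(a)+T(x)}=2f((a+x)/2)$. The needed monotonicity of $f$ I would try to deduce from convexity together with $f\geq0$ and $\inf f=0$: a convex function bounded below along the ray $a+sc$, $s\geq0$, with $c\geq0$, is nondecreasing there unless it decreases somewhere, and a decrease would force $f$ to take negative values further along the line through the small values $f(c_n)$. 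With monotonicity available, $x\leq\norm x1_A$ and homogeneity-type estimates obtained from the midpoint identity with $b=c_n$ should give $f(x)\to0$ as $x\to0$. If this route stalls, the fallback is to show $c_n\to0$ directly and then use continuity and convexity of $f$ on segments joining $x$ to $c_n$.

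Finally, for the FM equation, apply \eqref{eq:openmidpoint} with $b=c_n$. Letting $n\to\infty$ gives
\[
 f\big((a+c_n)/2\big)=\tfrac12\norm{T(a)+n^{-1}1_B}\longrightarrow\tfrac12f(a).
\]
Once $c_n\to0$ is known (it follows from the origin analysis), continuity of $f$ yields $f(a/2)=f(a)/2$. Then
\[
 \norm{T(a+b)}=2f\big((a+b)/2\big)=\norm{T(a)+T(b)}\qquad(a,b\in A_{++}),
\]
which is the FM equation on $A_{++}$.
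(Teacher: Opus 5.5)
Your continuity argument is fine. The local bound $f(x)\le 2f(a_0)$ whenever $2a_0-x\in A_{++}$, combined with Bernstein--Doetsch, gives continuity. The paper uses the same local bound but iterates midpoint convexity by hand to get $|f(a+k)-f(a)|\le2^{-n}f(a)$ for $\norm k<2^{-n}r$, which avoids citing the theorem.

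\textbf{The limit at the origin.} This is the genuine gap: you never actually prove it. The monotonicity route does not work as sketched. Convexity, positivity and $\inf f=0$ do not force monotonicity: $f(s,t)=\max\{s,t\}-\tfrac12\min\{s,t\}$ on $(0,\infty)^2$ has all three properties but decreases along $t\mapsto(1,t)$, $0<t<1$. In the paper, monotonicity of $f$ is only obtained later, from the FM equation.

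The missing idea is that your own local bound, centred at a point with small image, already gives the limit. Given $\varepsilon>0$, choose $c\in A_{++}$ with $T(c)=(\varepsilon/4)1_B$. For all sufficiently small $x\in A_{++}$ the element $2c-x$ is positive invertible. The midpoint identity for the pair $x,\,2c-x$ then gives
\[
 \norm{T(x)}\le\norm{T(x)+T(2c-x)}=2\norm{T(c)}=\varepsilon/2 .
\]

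\textbf{The FM step.} This contains a second error. You say that $c_n\to0$ ``follows from the origin analysis''. But the origin limit is the implication $x\to0\Rightarrow\norm{T(x)}\to0$, while you need the converse, and the converse fails for arbitrary preimages. Take $A=\mathbb R^2$ (coordinatewise product, max norm), $B=\mathbb R$, $T(s,t)=s$. This satisfies all hypotheses, yet $c_n=(1/n,1)$ has $T(c_n)=n^{-1}1_B$ and $\norm{c_n}=1$. Without $c_n\to0$, or monotonicity of $f$ (not yet available), you cannot pass from $f((a+c_n)/2)$ to $f(a/2)$.

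The fix is to let the second argument itself tend to zero. Applying the identity to $2a$ and $t1_A$ gives $2f(a+(t/2)1_A)=\norm{T(2a)+T(t1_A)}$. As $t\downarrow0$, continuity of $f$ handles the left side and the origin limit handles the right side, so $f(2a)=2f(a)$. Your final display then gives the FM equation.
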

\begin{proof}
The triangle inequality gives midpoint convexity of $f$.
Fix $a\in A_{++}$ and choose $r>0$ such that $a\pm k\in A_{++}$
whenever $\norm{k}<r$. Positivity and \eqref{eq:openmidpoint}
give $f(a+k)\leq2f(a)$. If $\norm{k}<2^{-n}r$, iteration of
midpoint convexity gives
\[
 f(a+k)\leq(1-2^{-n})f(a)+2^{-n}f(a+2^nk)
 \leq(1+2^{-n})f(a).
\]
Use the same bound for $-k$ and the inequality
$2f(a)\leq f(a+k)+f(a-k)$ to conclude that
$|f(a+k)-f(a)|\leq2^{-n}f(a)$. Thus $f$ is continuous.

For $\varepsilon>0$, choose $c\in A_{++}$ with
$T(c)=(\varepsilon/4)1_B$. For every sufficiently small
$x\in A_{++}$, the element $2c-x$ is positive invertible, and
\[
 \norm{T(x)}\leq\norm{T(x)+T(2c-x)}
 =2\norm{T(c)}=\varepsilon/2.
\]
This proves \eqref{eq:origin}. Finally, apply
\eqref{eq:openmidpoint} to $2a$ and $t1_A$ to obtain
\[
 2f(a+(t/2)1_A)=\norm{T(2a)+T(t1_A)}.
\]
Letting $t\downarrow0$ gives $f(2a)=2f(a)$. Substitution into
\eqref{eq:openmidpoint} now yields the FM equation.
\end{proof}

\begin{proposition}\label{prop:openFM}
Every surjection $T:A_{++}\to B_{++}$ satisfying
\begin{equation}\label{eq:openFM}
 \norm{T(a+b)}=\norm{T(a)+T(b)}\qquad(a,b\in A_{++})
\end{equation}
is additive and positively homogeneous. It extends uniquely
to a bounded positive linear surjection $L:A\to B$.
\end{proposition}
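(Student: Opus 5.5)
Apply Lemma~\ref{lem:opencriterion} to $R=T$, then extend by density. The hypotheses of that lemma are checked in three steps.

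\emph{Scalar facts.} Put $f(a)=\norm{T(a)}$ on $A_{++}$. The FM equation with $a=b$ gives $f(2a)=2f(a)$. The triangle inequality gives subadditivity.

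\emph{Order preservation.} If $a\leq b$ in $A_{++}$, the argument of Lemma~\ref{lem:prelim} needs care, because $c=b-a$ may fail to be invertible. Replace it by $b+t1_A=a+(c+t1_A)$. For every $x\in A_{++}$ this gives
\[
 \norm{T(b+t1_A)+T(x)}=\norm{T(a+x)+T(c+t1_A)}\geq\norm{T(a)+T(x)}.
\]
To let $t\downarrow0$ I need continuity of $f$. First, $f$ is monotone: $f(b+t1_A)\geq f(a)$ follows from the FM identity and norm monotonicity. Subadditivity then gives $f(b)\leq f(b+t1_A)\leq f(b)+f(t1_A)$. So continuity reduces to \eqref{eq:origin}, i.e.\ $\norm{T(x)}\to0$ as $x\to0$.

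To prove \eqref{eq:origin}, repeat the proof of Lemma~\ref{lem:opencontinuity}. Pick $c$ with $T(c)=(\varepsilon/4)1_B$. For small $x$, the element $2c-x$ is invertible and $f(x)\leq f(x+(2c-x))=f(2c)=\varepsilon/2$. With $f(2a)=2f(a)$, the midpoint identity also holds, so Lemma~\ref{lem:opencontinuity} applies directly and gives continuity of $f$ too.

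Homogeneity of $f$ follows as in Lemma~\ref{lem:prelim}: the dyadic and integer steps, then rational approximation with monotonicity. Having $f(b+t1_A+x)\to f(b+x)$, I pass to the limit in $\norm{T(b+t1_A)+T(x)}$ by testing against $T(x)=z$. By surjectivity $z$ ranges over $B_{++}$, and that norm equals $f(b+t1_A+x)$. This yields $\norm{T(b)+z}\geq\norm{T(a)+z}$ for all $z\in B_{++}$. Lemma~\ref{lem:order} with $r=0$ and Remark~\ref{rem:opentests} give $T(a)\leq T(b)$.

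\emph{The pullback formula \eqref{eq:openpullback}.} By Corollary~\ref{cor:distance}, Remark~\ref{rem:opentests} and surjectivity,
\[
 \norm{T(a)-T(b)}=\sup_{z\in B_{++}}\bigl|\norm{T(a)+z}-\norm{T(b)+z}\bigr|=\sup_{x\in A_{++}}|f(a+x)-f(b+x)|.
\]
Lemma~\ref{lem:opencriterion} now gives additivity and norm continuity of $T$.

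\emph{Extension.} Positive homogeneity follows from additivity, order preservation and closedness, as in the proof of Theorem~\ref{thm:main}. Extend $T$ to $A_+$ by $\overline T(a)=\lim_{t\downarrow0}T(a+t1_A)$. The limit exists because $T(a+t1_A)=T(a+s1_A)+(t-s)T(1_A)$ for $t>s$, which is Cauchy. Then $\overline T$ is additive on $A_+$. Every element of $A$ is a difference of elements of $A_{++}$ (write $x=(x+\norm x1+1)-(\norm x1+1)$), so $L(a-b)=T(a)-T(b)$ is well defined and linear by additivity. It is positive because $\overline T(A_+)\subset B_+$, and bounded by the positive-series argument, or simply by order: $-\norm x T(1)\leq L(x)\leq\norm xT(1)$. $L$ is surjective since $B=B_{++}-B_{++}$. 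Uniqueness follows from $A=A_{++}-A_{++}$.

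The main obstacle is the order-preservation step, where invertibility of $b-a$ fails. The plan handles it via the $t1_A$ perturbation together with \eqref{eq:origin}.
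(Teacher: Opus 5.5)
Your proof is correct and follows essentially the same route as the paper. Lemma~\ref{lem:opencontinuity} supplies continuity of $f$ and the limit \eqref{eq:origin}; the $t1_A$ perturbation together with the open-cone order test of Remark~\ref{rem:opentests} gives order preservation; the distance test gives \eqref{eq:openpullback}; Lemma~\ref{lem:opencriterion} yields additivity; and $L(a-b)=T(a)-T(b)$ is shown positive by letting $t\downarrow0$ in $T(x+t1_A)$. The differences are cosmetic: you pass to the limit inside the order test rather than first proving $f$ order preserving, and you bound $L$ by $-\norm{x}T(1_A)\leq L(x)\leq\norm{x}T(1_A)$ after establishing positivity on $A_+$, where the paper applies the same order bound to differences of elements of $A_{++}$.
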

\begin{proof}
Set $f(a)=\norm{T(a)}$. The FM equation gives subadditivity
and $f(2a)=2f(a)$, and hence \eqref{eq:openmidpoint}.
Lemma~\ref{lem:opencontinuity} supplies continuity of $f$ and
the limit \eqref{eq:origin}.
If $b-a\in A_{++}$, positivity gives $f(b)\geq f(a)$.
For $a\leq b$, replace $b$ by $b+\varepsilon1_A$ and use
continuity. Thus $f$ is order preserving.
For any integer $n\geq1$, choose $2^k>n$. Subadditivity yields
\[
 f(na)\leq nf(a),\qquad
 2^kf(a)\leq f(na)+(2^k-n)f(a).
\]
Therefore $f(na)=nf(a)$. Rational scaling and continuity give
$f(ta)=tf(a)$ for all $t>0$.

For $a\leq b$ and $x\in A_{++}$,
\[
 \norm{T(a)+T(x)}=f(a+x)\leq f(b+x)=\norm{T(b)+T(x)}.
\]
Surjectivity and the open-cone order test in
Remark~\ref{rem:opentests} show that $T(a)\leq T(b)$.
The corresponding distance test gives
\[
 \norm{T(a)-T(b)}
 =\sup_{x\in A_{++}}|f(a+x)-f(b+x)|.
\]
Lemma~\ref{lem:opencriterion} now proves additivity.

Additivity gives rational homogeneity of $T$. On each ray,
$T(sa)\leq T(ta)$ for $0<s<t$; rational approximation in the
closed target cone gives $T(ta)=tT(a)$ for all $t>0$.
Define
\[
 L(a-b)=T(a)-T(b)\qquad(a,b\in A_{++}).
\]
Every element of $A$ has such a representation, and additivity
proves independence of the representation and real linearity.
For $a,b\in A_{++}$ and $t>\norm{a-b}$, order preservation
and additivity give
\[
 -tT(1_A)\leq T(a)-T(b)\leq tT(1_A).
\]
Thus $\norm{L(a-b)}\leq\norm{T(1_A)}\norm{a-b}$ and $L$ is
bounded. For $x\in A_+$ and $\varepsilon>0$,
\[
 T(x+\varepsilon1_A)=L(x+\varepsilon1_A)
 =L(x)+\varepsilon L(1_A)\in B_+.
\]
Letting $\varepsilon\downarrow0$ and using norm closedness of
$B_+$ gives $L(x)\in B_+$.
Surjectivity follows by lifting a representation of an arbitrary
element of $B$ as a difference of two elements of $B_{++}$.
Finally, $A=A_{++}-A_{++}$ ensures uniqueness.
\end{proof}

\Needspace{13\baselineskip}
\begin{theorem}\label{thm:openmidpoint}
Let $T:A_{++}\to B_{++}$ be a bijection. The following
conditions are equivalent:
\begin{enumerate}
\item[\cond{i}] $T(a+b)=T(a)+T(b)$ for all $a,b\in A_{++}$.
\item[\cond{ii}] $T((a+b)/2)=(T(a)+T(b))/2$ for all $a,b\in A_{++}$.
\item[\cond{iii}] $\norm{T(a+b)}=\norm{T(a)+T(b)}$ for all $a,b\in A_{++}$.
\item[\cond{iv}] $\norm{T((a+b)/2)}=\norm{(T(a)+T(b))/2}$ for all $a,b\in A_{++}$.
\item[\cond{v}] There is a Jordan isomorphism $J:A\to B$ such that
\begin{equation}\label{eq:openJordan}
 T(a)=U_{T(1_A)^{1/2}}J(a)\qquad(a\in A_{++}).
\end{equation}
\end{enumerate}
The Jordan isomorphism in \cond{v} is unique. No continuity or
homogeneity of $T$ is assumed.
\end{theorem}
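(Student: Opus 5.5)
The strategy is to funnel everything through the norm conditions. We prove \cond{i}$\Rightarrow$\cond{ii}$\Rightarrow$\cond{iv}$\Rightarrow$\cond{iii}$\Rightarrow$\cond{v}$\Rightarrow$\cond{i}. Uniqueness of $J$ is handled separately at the end.

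The first three implications are short.

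\emph{\cond{i}$\Rightarrow$\cond{ii}.} This is not quite immediate, because $T(a/2)$ must be compared with $T(a)/2$. Additivity gives $T(a)=T(a/2+a/2)=2T(a/2)$. Hence $T((a+b)/2)=T(a/2)+T(b/2)=(T(a)+T(b))/2$.

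\emph{\cond{ii}$\Rightarrow$\cond{iv}.} Take norms.

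\emph{\cond{iv}$\Rightarrow$\cond{iii}.} This is exactly the last assertion of Lemma~\ref{lem:opencontinuity}, which uses only surjectivity.

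\emph{\cond{iii}$\Rightarrow$\cond{v}.} Proposition~\ref{prop:openFM} shows that $T$ is additive and positively homogeneous. It also shows that $T$ extends to a bounded positive linear surjection $L:A\to B$ with $L(A_{++})=T(A_{++})=B_{++}$. We then argue as in Proposition~\ref{prop:Jordan}, using bijectivity.

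\begin{itemize}
\item[(1)] If $L(a-b)=0$ with $a,b\in A_{++}$, then $T(a)=T(b)$, so $a=b$. Hence $L$ is injective.
\item[(2)] If $L(a-b)\geq0$, then for every $\varepsilon>0$ the element $L(a-b)+\varepsilon T(1_A)$ lies in $B_{++}$. So it equals $T(c_\varepsilon)$ for some $c_\varepsilon\in A_{++}$.
\item[(3)] Additivity then gives $T(a+\varepsilon1_A)=T(b+c_\varepsilon)$. By injectivity, $a-b+\varepsilon1_A=c_\varepsilon\geq0$.
\item[(4)] Letting $\varepsilon\downarrow0$ shows $a-b\geq0$.
\end{itemize}

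Thus $L$ is a linear order isomorphism of unital JB-algebras, and $h=L(1_A)=T(1_A)\in B_{++}$. The map $J=U_{h^{-1/2}}L$ is then a unital linear order isomorphism $A\to B$. It maps into $B$ because $h\in B$, and it is onto $B$ because $U_{h^{\pm1/2}}$ are mutually inverse bijections of $B$. By the result cited in Proposition~\ref{prop:Jordan} (\cite[Corollary 2.2, Proposition 2.3]{LRW}), $J$ is a Jordan isomorphism. Inverting $U_{h^{-1/2}}$ gives \eqref{eq:openJordan}. This parallels Remark~\ref{rem:restriction} without passing to biduals.

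\emph{\cond{v}$\Rightarrow$\cond{i}.} The right-hand side of \eqref{eq:openJordan} is the restriction of the linear map $U_{T(1_A)^{1/2}}J$, so $T$ is additive.

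\emph{Uniqueness.} Suppose $J_1,J_2$ both satisfy \eqref{eq:openJordan}. Since $U_{T(1_A)^{1/2}}$ is invertible, $J_1(a)=J_2(a)$ for $a\in A_{++}$. Because $A=A_{++}-A_{++}$, linearity gives $J_1=J_2$.

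\emph{Main obstacle.} The delicate step is the order-isomorphism argument in \cond{iii}$\Rightarrow$\cond{v}. Proposition~\ref{prop:Jordan} used surjectivity onto the closed cone $B_+$, but here only $B_{++}$ is hit. The $\varepsilon T(1_A)$ perturbation is meant to fix this, and it must be checked that $L(a-b)+\varepsilon T(1_A)$ really is invertible. That holds because $T(1_A)\geq\delta1_B$ for some $\delta>0$. Everything else reduces to results already established in the paper.
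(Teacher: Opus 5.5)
Your proof is correct and follows essentially the paper's route: Lemma~\ref{lem:opencontinuity} and Proposition~\ref{prop:openFM} take you from the norm conditions to an additive map with a bounded positive linear extension $L$, and normalizing by $U_{T(1_A)^{-1/2}}$, together with the fact that unital linear order isomorphisms are Jordan isomorphisms, gives \cond{v}. The only local difference is how positivity of $L^{-1}$ is shown. You use the perturbation $L(a-b)+\varepsilon T(1_A)\in B_{++}$ and injectivity of $T$, whereas the paper observes that $T^{-1}$ is additive on $B_{++}$ and applies the extension argument of Proposition~\ref{prop:openFM} to it; both arguments are valid.
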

\begin{proof}
Lemma~\ref{lem:opencontinuity} and Proposition~\ref{prop:openFM}
give \cond{iv}$\Rightarrow$\cond{iii}$\Rightarrow$\cond{i}.
If \cond{i} holds, the same proposition extends $T$ to a bounded
positive linear surjection $L:A\to B$. Its inverse on the open
cone is additive, since $T^{-1}(T(a)+T(b))=a+b$.
Applying the extension argument to $T^{-1}$ gives a bounded
positive linear inverse to $L$. Thus $L$ is a linear order
isomorphism. By \cite[Proposition 2.3]{LRW},
$L=U_{L(1_A)^{1/2}}J$ for a Jordan isomorphism $J$.
Since $L(1_A)=T(1_A)$, this proves \cond{v}.
The linear formula in \cond{v} implies all four other conditions,
and \cond{ii} implies \cond{iv} by taking norms.
Uniqueness follows by applying $U_{T(1_A)^{-1/2}}$ on the open
cone and using its real linear span $A$.
\end{proof}

\begin{proposition}[The harmonic mean]\label{prop:harmonicFM}
Let $T:A_{++}\to B_{++}$ be a bijection and define
\[
 a!b=\left(\frac{a^{-1}+b^{-1}}2\right)^{-1}.
\]
The condition
\begin{equation}\label{eq:harmonicFM}
 \norm{T(a!b)}=\norm{T(a)!T(b)}\qquad(a,b\in A_{++})
\end{equation}
is equivalent to each of conditions \cond{i}--\cond{v} in
Theorem~\ref{thm:openmidpoint}. In that case $T$ preserves the
harmonic mean itself, before taking norms.
\end{proposition}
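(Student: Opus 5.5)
Conjugate by inversion so that the harmonic-mean condition becomes a midpoint condition for a new bijection, and then apply Theorem~\ref{thm:openmidpoint}. Let $\iota_A(a)=a^{-1}$ on $A_{++}$ and similarly $\iota_B$ on $B_{++}$. Both are order-reversing bijections of the open cones. Define
\[
 S=\iota_B\circ T\circ\iota_A:A_{++}\to B_{++},\qquad S(x)=T(x^{-1})^{-1}.
\]
Then $S$ is a bijection, and $a!b=\iota_A((a^{-1}+b^{-1})/2)$. Thus \eqref{eq:harmonicFM} with $a=x^{-1}$ and $b=y^{-1}$ reads
\[
 \norm{S((x+y)/2)^{-1}}=\norm{\bigl((S(x)+S(y))/2\bigr)^{-1}}\qquad(x,y\in A_{++}).
\]
So $S$ satisfies a norm-of-inverse midpoint identity rather than the norm midpoint identity \cond{iv}.

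\textbf{Main obstacle: passing from $\norm{u^{-1}}$ to $\norm{u}$.} The quantity $\norm{u^{-1}}^{-1}$ is the minimum of the spectrum, not the norm, so Lemma~\ref{lem:opencontinuity} and Proposition~\ref{prop:openFM} cannot be quoted verbatim. I would try to redo their proofs with $\norm{\cdot}$ replaced by $\lambda(u)=\norm{u^{-1}}^{-1}=\max\{s:u\geq s1\}$, which is superadditive, monotone and positively homogeneous on $B_{++}$. The inputs needed are as follows.
\begin{itemize}
\item[(a)] An order test dual to Lemma~\ref{lem:order}: for $u,v\in B_{++}$, $u\leq v$ iff $\lambda(u+z)\leq\lambda(v+z)$ for all $z\in B_{++}$.
\item[(b)] The corresponding distance formula $\norm{u-v}=\sup_z|\lambda(u+z)-\lambda(v+z)|$, proved by taking $z$ large and localizing near the bottom of the spectrum of $u-v$.
\end{itemize}
(a) and (b) should follow from Lemma~\ref{lem:order} applied to the elements $N1-u$ and $N1-v$, since $\lambda(w)=N-\norm{N1-w}$ for $0\leq w\leq N1$. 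Test elements $z$ must then be arranged so that $N1-u-z$ remains positive; this is where I expect the real difficulty.

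\textbf{Carrying the argument through.} With (a) and (b), $g(x)=\lambda(S(x))$ would take the role of $f$. Concavity replaces convexity in the continuity argument of Lemma~\ref{lem:opencontinuity}, and the hypothesis gives the FM-type identity $\lambda(S(x+y))=\lambda(S(x)+S(y))$ after a homogeneity step. $S$ is then order preserving by (a), and the pullback formula \eqref{eq:openpullback} holds with $g$ by (b). Lemma~\ref{lem:opencriterion} is stated for an arbitrary continuous positively homogeneous $f$, so it applies once we check $\norm{S(x)}\to0$ as $x\to0$. This limit would come from bounding $S(x)$ above via order preservation by $S(t1)$, together with homogeneity.

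\textbf{Conclusion.} Lemma~\ref{lem:opencriterion} gives that $S$ is additive, so Theorem~\ref{thm:openmidpoint} yields
\[
 S(x)=U_{k^{1/2}}J(x)\qquad\text{with } k=S(1_A)=T(1_A)^{-1}.
\]
Then \eqref{eq:quadraticinverse} and $J(x^{-1})=J(x)^{-1}$ give
\[
 T(a)=S(a^{-1})^{-1}=U_{k^{-1/2}}J(a)=U_{T(1_A)^{1/2}}J(a),
\]
which is \cond{v}.

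\textbf{Converse.} If \cond{v} holds, then $T(a)=U_{h^{1/2}}J(a)$ with $h=T(1_A)$. Jordan isomorphisms commute with inversion, and \eqref{eq:quadraticinverse} gives $U_{h^{1/2}}(c)^{-1}=U_{h^{-1/2}}(c^{-1})$. Therefore
\[
 T(a)!T(b)=U_{h^{1/2}}J(a!b)=T(a!b)
\]
by linearity of $U_{h^{-1/2}}J$. This gives harmonic mean preservation before taking norms, and hence \eqref{eq:harmonicFM}.
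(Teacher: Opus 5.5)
\textbf{Verdict: same route as the paper, but with a genuine gap in proving $\norm{S(x)}\to0$.} Your architecture matches the paper's. You use the same conjugate $S(x)=T(x^{-1})^{-1}$ and the same functional $\lambda=\min\sigma$ (the paper's $\ell$). You also use the order and distance tests (a) and (b), Lemma~\ref{lem:opencriterion}, and the return to $T$ through \eqref{eq:quadraticinverse}. Your converse is correct.

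\textbf{The gap.} Bounding $S(x)$ above by $S(t1_A)$ only reduces the claim to $\norm{S(t1_A)}\to0$. The only homogeneity available at that point is that of $g=\lambda\circ S$, which gives $\lambda(S(t1_A))=t\lambda(S(1_A))\to0$. That controls the bottom of the spectrum, not the norm. Homogeneity of $S$ itself is a consequence of the additivity that Lemma~\ref{lem:opencriterion} is supposed to produce, so you cannot use it here.

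\textbf{The ordering is also circular.} The homogeneity step $g(2a)=2g(a)$, modelled on Lemma~\ref{lem:opencontinuity} with $2a$ and $t1_A$, needs $\norm{S(t1_A)}\to0$. The reason is that $\lambda(u+v)$ is in general bounded above only by $\lambda(u)+\norm v$. Yet in your sequence, order preservation of $S$ and the limit at zero both come after that step. The $2c-x$ device of Lemma~\ref{lem:opencontinuity} does not rescue this either, because in the $\lambda$ setting it bounds only $\lambda(S(x))$, not $\norm{S(x)}$.

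\textbf{The missing idea.} Obtain monotonicity and order preservation before homogeneity, and then use surjectivity. This is the order used in the paper.
\begin{enumerate}
\item Continuity and midpoint concavity make $g$ concave. For $a\in A_{++}$ and $k\in A_+$, the map $t\mapsto g(a+tk)$ is positive and concave on $[0,\infty)$, hence nondecreasing. So $g$ is monotone without any homogeneity.
\item For $a\le b$, the midpoint identity alone gives $\lambda(S(a)+S(x))=2g((a+x)/2)\le 2g((b+x)/2)=\lambda(S(b)+S(x))$. Then (a) together with surjectivity gives $S(a)\le S(b)$.
\item Now pick $c$ with $S(c)=\varepsilon 1_B$. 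Every $x\in A_{++}$ with $\norm x\le\min\sigma(c)$ satisfies $x\le c$, hence $\norm{S(x)}\le\varepsilon$.
\item Only then does the $t1_A$ limit give $g(2a)=2g(a)$. Concavity on rays with $g(0^+)=0$ then gives full homogeneity.
\end{enumerate}

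\textbf{On (a) and (b).} The difficulty you anticipate does not arise, and no detour through Lemma~\ref{lem:order} is needed. For $r>\max\{\norm u,\norm v\}$, the test elements $z=r1_B-u$ and $z=r1_B-v$ lie in $B_{++}$. They give directly $\lambda(v+z)-\lambda(u+z)=\lambda(v-u)$ and $\lambda(u+z)-\lambda(v+z)=\lambda(u-v)$, respectively. Finally, $\max\{|\lambda(v-u)|,|\lambda(u-v)|\}=\norm{u-v}$.
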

\begin{proof}
For an element $x$ of a unital JB-algebra, put
$\ell(x)=\min\sigma(x)$. This is a concave, order-preserving,
positively homogeneous function with
$|\ell(x)-\ell(y)|\leq\norm{x-y}$.
The identity $\norm{x^{-1}}=1/\ell(x)$ for $x\in A_{++}$
shows that \eqref{eq:harmonicFM} is equivalent, for the bijection
\[
 S(x)=T(x^{-1})^{-1},
\]
to the identity
\begin{equation}\label{eq:ellmidpoint}
 \ell(S((x+y)/2))=\ell((S(x)+S(y))/2).
\end{equation}
Set $f(x)=\ell(S(x))>0$. By concavity of $\ell$, the function
$f$ is midpoint concave.

We first prove its continuity. Fix $a\in A_{++}$ and choose
$r>0$ such that $a\pm k\in A_{++}$ for $\norm{k}<r$.
If $\norm{k}<2^{-n}r$, midpoint concavity gives
\[
 f(a+k)\geq(1-2^{-n})f(a)+2^{-n}f(a+2^nk)
 \geq(1-2^{-n})f(a).
\]
The same bound for $-k$ and the inequality
$f(a+k)+f(a-k)\leq2f(a)$ show that
$|f(a+k)-f(a)|\leq2^{-n}f(a)$.
Thus $f$ is continuous and concave for all real convex
coefficients. It is also order preserving: for $a\in A_{++}$
and $k\in A_+$ the function $t\mapsto f(a+tk)$ is positive and
concave on $[0,\infty)$. A negative secant slope would force
negative values for large $t$, so the function is nondecreasing.

For $u,v\in B_{++}$,
\begin{equation}\label{eq:ellorder}
 u\leq v\quad\Longleftrightarrow\quad
 \ell(u+z)\leq\ell(v+z)\quad(z\in B_{++}).
\end{equation}
For the reverse implication take $z=r1_B-u>0$ with
$r>\norm{u}$, to obtain $\ell(v-u)\geq0$.
For $a\leq b$ in $A_{++}$, scalar monotonicity and
\eqref{eq:ellmidpoint} give
\[
 \ell(S(a)+S(x))=2f((a+x)/2)
 \leq2f((b+x)/2)=\ell(S(b)+S(x)).
\]
Surjectivity and \eqref{eq:ellorder} show that $S$ is order
preserving. Given $\varepsilon>0$, choose $c\in A_{++}$ with
$S(c)=\varepsilon1_B$. All sufficiently small $x\in A_{++}$
satisfy $x\leq c$, and hence $S(x)\leq\varepsilon1_B$.
Therefore $\norm{S(x)}\to0$ as $x\to0$ in the open cone.

Apply \eqref{eq:ellmidpoint} to $2a$ and $t1_A$ and let
$t\downarrow0$ to obtain $f(2a)=2f(a)$.
For fixed $a$, extend $g(t)=f(ta)$ by $g(0)=0$. Since
\[
 0<g(t)=\ell(S(ta))\leq\norm{S(ta)}\longrightarrow0
 \qquad(t\downarrow0),
\]
this extension is continuous at zero. It is therefore continuous
and concave on $[0,\infty)$, so $g(t)/t$ is nonincreasing for
$t>0$. Since $g(2t)=2g(t)$, that ratio
is constant: for $0<s<t$, compare it at $s,t,2^ns$, where
$2^ns\geq t$. It follows that $f(ta)=tf(a)$ for $t>0$.

We also have the distance identity
\begin{equation}\label{eq:elldistance}
 \norm{u-v}=\sup_{z\in B_{++}}|\ell(u+z)-\ell(v+z)|.
\end{equation}
The upper bound follows from the Lipschitz property of $\ell$.
Taking $z=r1_B-u$ and $z=r1_B-v$, with $r$ sufficiently large,
gives the lower bound, since
\[
 \max\{|\ell(v-u)|,|\ell(u-v)|\}=\norm{u-v}.
\]
Surjectivity, \eqref{eq:ellmidpoint}, and homogeneity now give
\[
 \norm{S(a)-S(b)}
 =\sup_{x\in A_{++}}|f(a+x)-f(b+x)|.
\]
All hypotheses of Lemma~\ref{lem:opencriterion} have been
verified, so $S$ is additive. Apply
Theorem~\ref{thm:openmidpoint} to write
$S(x)=U_{s^{1/2}}J(x)$, where $s=S(1_A)=T(1_A)^{-1}$
and $J:A\to B$ is a Jordan isomorphism.
Jordan isomorphisms preserve inverses. By
\eqref{eq:quadraticinverse},
\[
 T(a)=S(a^{-1})^{-1}=U_{s^{-1/2}}J(a)
 =U_{T(1_A)^{1/2}}J(a),
\]
which is condition \cond{v}.

Conversely, $J$ preserves the harmonic mean by linearity and
preservation of inverses. For $c,u,v$ positive invertible,
linearity of $U_{c^{-1}}$ and \eqref{eq:quadraticinverse} give
\[
 (U_cu)!(U_cv)
 =\left(U_{c^{-1}}\left(\frac{u^{-1}+v^{-1}}2\right)\right)^{-1}
 =U_c(u!v).
\]
Thus a map in \cond{v} satisfies $T(a!b)=T(a)!T(b)$ and hence
\eqref{eq:harmonicFM}.
\end{proof}

\begin{example}[The geometric-mean identity is weaker]
For positive invertible elements in a JB-algebra, the geometric
mean is
\[
 a\#b=U_{a^{1/2}}\left((U_{a^{-1/2}}b)^{1/2}\right).
\]
It is the unique positive invertible $x$ satisfying
$U_x(a^{-1})=b$; see \cite[Section 2.5]{LRW}.
The fundamental identity $U_{U_cx}=U_cU_xU_c$, together with
\eqref{eq:quadraticinverse}, shows that
$(U_ca)\#(U_cb)=U_c(a\#b)$ for positive invertible $c$.
Jordan isomorphisms preserve the defining formula. Thus
condition \cond{v} in Theorem~\ref{thm:openmidpoint} implies
preservation of the geometric mean itself.

The converse fails already for $A=B=\mathbb R$. The bijection
$T(t)=t^2$ on $(0,\infty)$ satisfies
\[
 T(a\#b)=ab=T(a)\#T(b),
\]
but $T(1+1)=4\ne2=T(1)+T(1)$ and
$T((1+3)/2)=4\ne5=(T(1)+T(3))/2$.
Consequently the geometric-mean norm identity cannot be added
to the equivalent conditions of Theorem~\ref{thm:openmidpoint}.
\end{example}

\Needspace{15\baselineskip}
\begin{corollary}[Arithmetic and harmonic mean identities on $C^*$-algebras]
\label{cor:Cstarmeans}
Let $\mathcal A,\mathcal B$ be nonzero unital $C^*$-algebras,
and let $T:\mathcal A_{++}\to\mathcal B_{++}$ be a bijection.
Write $a!b=((a^{-1}+b^{-1})/2)^{-1}$.
The following conditions are equivalent:
\begin{enumerate}
\item[\cond{i}] $\norm{T(a+b)}=\norm{T(a)+T(b)}$ for all $a,b\in\mathcal A_{++}$.
\item[\cond{ii}] $\norm{T((a+b)/2)}=\norm{(T(a)+T(b))/2}$ for all $a,b\in\mathcal A_{++}$.
\item[\cond{iii}] $\norm{T(a!b)}=\norm{T(a)!T(b)}$ for all $a,b\in\mathcal A_{++}$.
\item[\cond{iv}] There is a Jordan $*$-isomorphism $J:\mathcal A\to\mathcal B$ such that
\[
 T(a)=T(1_{\mathcal A})^{1/2}J(a)T(1_{\mathcal A})^{1/2}
 \qquad(a\in\mathcal A_{++}).
\]
\end{enumerate}
The Jordan $*$-isomorphism in \cond{iv} is unique.
Each condition is also equivalent to additivity of $T$, and
to the exact arithmetic-midpoint identity. When they hold,
$T$ preserves the harmonic mean itself. No continuity or
homogeneity is assumed.
\end{corollary}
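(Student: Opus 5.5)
The plan is to pass to self-adjoint parts and apply Theorem~\ref{thm:openmidpoint} together with Proposition~\ref{prop:harmonicFM}, exactly as in the proof of Corollary~\ref{cor:Cstarnormsum}. Put $A=\mathcal A_{\mathrm{sa}}$ and $B=\mathcal B_{\mathrm{sa}}$ with the product $x\circ y=(xy+yx)/2$. These are nonzero unital JB-algebras with units $1_{\mathcal A},1_{\mathcal B}$. Their positive invertible cones are $\mathcal A_{++}$ and $\mathcal B_{++}$, since invertibility of a self-adjoint element in the Jordan sense coincides with $C^*$-invertibility: the Jordan inverse lies in the commutative $C^*$-subalgebra generated by the element and the unit. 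Thus $T$ is a bijection $A_{++}\to B_{++}$. Under this identification, conditions \cond{i} and \cond{ii} of the corollary are conditions \cond{iii} and \cond{iv} of Theorem~\ref{thm:openmidpoint}. The harmonic mean $a!b$ is the same element whether inverses are computed in $\mathcal A$ or in $A$, so \cond{iii} of the corollary is \eqref{eq:harmonicFM}.

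Next I will translate condition \cond{v} of Theorem~\ref{thm:openmidpoint} into \cond{iv} here. For self-adjoint $x,y$ the quadratic representation is $U_xy=2x\circ(x\circ y)-x^2\circ y=xyx$, by direct expansion. Hence
\[
 U_{T(1_{\mathcal A})^{1/2}}J_0(a)=T(1_{\mathcal A})^{1/2}J_0(a)T(1_{\mathcal A})^{1/2}.
\]
A real Jordan isomorphism $J_0:A\to B$ extends to $J(x+iy)=J_0(x)+iJ_0(y)$. As in Corollary~\ref{cor:Cstarnormsum}, $J$ is a Jordan $*$-isomorphism. Conversely, a Jordan $*$-isomorphism $J$ restricts to a Jordan isomorphism of self-adjoint parts, and it is determined by that restriction. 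Uniqueness of $J$ in \cond{iv} then follows from uniqueness in Theorem~\ref{thm:openmidpoint}.

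It remains to assemble the equivalences. Theorem~\ref{thm:openmidpoint} gives \cond{i}$\Leftrightarrow$\cond{ii}$\Leftrightarrow$\cond{iv}, and it also gives equivalence with additivity and with the exact arithmetic-midpoint identity, which are its conditions \cond{i} and \cond{ii}. Proposition~\ref{prop:harmonicFM} adds \cond{iii}, and it yields preservation of the harmonic mean itself when these conditions hold. No continuity or homogeneity enters, because none is assumed in those results. The only point needing care is the dictionary between the two settings: that $\mathcal A_{++}$ is exactly the Jordan positive invertible cone, and that $U_{c^{1/2}}$ is the two-sided multiplication $c^{1/2}(\cdot)c^{1/2}$. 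Both are routine verifications, so I do not expect a genuine obstacle.
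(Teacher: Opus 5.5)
Your proposal is correct and follows the paper's proof essentially verbatim. You pass to $\mathcal A_{\mathrm{sa}},\mathcal B_{\mathrm{sa}}$, apply Theorem~\ref{thm:openmidpoint} and Proposition~\ref{prop:harmonicFM}, identify $U_c(x)=cxc$, and complexify the real Jordan isomorphism as in Corollary~\ref{cor:Cstarnormsum}. You also check the dictionary details (invertibility and square roots) slightly more explicitly than the paper does.
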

\begin{proof}
Apply Theorem~\ref{thm:openmidpoint} and
Proposition~\ref{prop:harmonicFM} to
$A=\mathcal A_{\mathrm{sa}}$ and $B=\mathcal B_{\mathrm{sa}}$.
Their open cones, inverses, and harmonic means agree with
those in the underlying $C^*$-algebras.
The resulting real Jordan isomorphism complexifies to a
Jordan $*$-isomorphism $J:\mathcal A\to\mathcal B$, as in the
proof of Corollary~\ref{cor:Cstarnormsum}.
For self-adjoint $c,x$ in a $C^*$-algebra, $U_c(x)=cxc$;
thus \eqref{eq:openJordan} becomes the formula in \cond{iv}.
The remaining assertions follow from the same JB-algebra
results and the uniqueness of complexification.
\end{proof}

In particular, the equivalence of \cond{ii} and \cond{iv} in
Corollary~\ref{cor:Cstarmeans} answers the norm midpoint
question posed in the final section of \cite{HH}.

\subsection*{Declaration on the Use of Generative AI}
ChatGPT (OpenAI) was used during preparation of this manuscript for language
editing, organization of the presentation, literature-oriented discussion, and
discussion of mathematical arguments. All mathematical statements and proofs
were independently verified by the authors, who take full responsibility for
the content of the manuscript.

\Needspace{24\baselineskip}


\begin{thebibliography}{99}
\bibitem{DLL}
Y.~Dong, D.~H.~Leung, and L.~Li,
\emph{Stability of isometries between the positive cones of ordered
Banach spaces},
Math. Ann. \textbf{389} (2024), 253--280.
\href{https://doi.org/10.1007/s00208-023-02649-z}{doi:10.1007/s00208-023-02649-z}.

\bibitem{DLMW}
Y.~Dong, L.~Li, L.~Moln\'ar, and N.-C.~Wong,
\emph{Transformations preserving the norm of means between positive
cones of general and commutative $C^*$-algebras},
J. Operator Theory \textbf{88} (2022), 365--406.
Preprint: \href{https://arxiv.org/abs/2104.05909}{arXiv:2104.05909}.

\bibitem{FM}
P.~Fischer and Gy.~Musz\'ely,
\emph{On some new generalizations of the functional equation of Cauchy},
Canad. Math. Bull. \textbf{10} (1967), 197--205.

\bibitem{HS}
H.~Hanche-Olsen and E.~St\o rmer,
\emph{Jordan Operator Algebras}, Monographs and Studies in
Mathematics, vol.~21, Pitman, Boston, 1984.
\href{https://hanche.folk.ntnu.no/joa/}{Author-hosted edition}.

\bibitem{HO}
O.~Hatori and S.~Oi,
\emph{Order isomorphisms on positive cones of non-unital $C^*$-algebras},
unpublished manuscript, 2026; consulted September~16, 2026.

\bibitem{HirotaComm}
D.~Hirota,
\emph{The Cauchy equation and norm additive mappings between positive
cones of commutative $C^*$-algebras},
J. Math. Anal. Appl. \textbf{561} (2026), no.~1, Article 130606.
\href{https://doi.org/10.1016/j.jmaa.2026.130606}{doi:10.1016/j.jmaa.2026.130606}.

\bibitem{HH}
D.~Hirota and J.~Oppekepenguin,
\emph{On the Fischer--Musz\'ely equation for the positive cones of
$C^*$-algebras},
\href{https://arxiv.org/abs/2606.28665v3}{arXiv:2606.28665v3}, 2026.

\bibitem{LRW}
B.~Lemmens, M.~Roelands, and M.~Wortel,
\emph{Hilbert and Thompson isometries on cones in JB-algebras},
Math. Z. \textbf{292} (2019), 1511--1547.
\href{https://doi.org/10.1007/s00209-018-2144-8}{doi:10.1007/s00209-018-2144-8}.

\bibitem{SV}
P.~\v{S}emrl and J.~V\"ais\"al\"a,
\emph{Nonsurjective nearisometries of Banach spaces},
J. Funct. Anal. \textbf{198} (2003), 268--278.
\href{https://arxiv.org/abs/math/0112294v2}{Author preprint: arXiv:math/0112294v2}.

\bibitem{Shultz}
F.~W.~Shultz,
\emph{On normed Jordan algebras which are Banach dual spaces},
J. Funct. Anal. \textbf{31} (1979), 360--376.
\href{https://doi.org/10.1016/0022-1236(79)90010-7}{doi:10.1016/0022-1236(79)90010-7}.

\bibitem{Sun}
L.~Sun,
\emph{Hyers--Ulam stability of $\varepsilon$-isometries between the
positive cones of $c_0$},
Results Math. \textbf{77} (2022), Article 37.
\href{https://doi.org/10.1007/s00025-021-01581-5}{doi:10.1007/s00025-021-01581-5}.

\bibitem{SCZ}
L.~Sun, G.~Cai, and B.~Zheng,
\emph{Approximate norm-additive maps on Banach spaces},
Bull. London Math. Soc. \textbf{56} (2024), 2991--3010.
\href{https://doi.org/10.1112/blms.13115}{doi:10.1112/blms.13115}.

\bibitem{Tabor}
J.~Tabor,
\emph{Stability of the Fischer--Musz\'ely functional equation},
Publ. Math. Debrecen \textbf{62} (2003), 205--211.
\href{https://doi.org/10.5486/PMD.2003.2725}{doi:10.5486/PMD.2003.2725}.

\bibitem{V}
I.~A.~Vestfrid,
\emph{$\varepsilon$-isometries between the positive cones of continuous
functions spaces},
Israel J. Math. \textbf{253} (2023), 989--1000.
\href{https://doi.org/10.1007/s11856-022-2393-4}{doi:10.1007/s11856-022-2393-4}.

\bibitem{WY}
J.~D.~M.~Wright and M.~A.~Youngson,
\emph{On isometries of Jordan algebras},
J. London Math. Soc. (2) \textbf{17} (1978), 339--344.

\end{thebibliography}
\end{document}